\begin{document}
\title{The independence ratio of 4-cycle-free planar graphs}
\author{Tom Kelly}
\author{Sid Kolichala}
\author{Caleb McFarland}
\author{Jatong Su}
\affil[]{Georgia Institute of Technology, Atlanta, Georgia}
\affil[]{{\texttt{\{tom.kelly, skolichala3, cmcfarland30, jsu77\}@gatech.edu}}}
\date{}
\maketitle

\begin{abstract}
    We prove that every $n$-vertex planar graph $G$ with no triangle sharing an edge with a 4-cycle has independence ratio $n/\alpha(G) \leq 4 - \varepsilon$ for $\varepsilon = 1/30$. This result implies that the same bound holds for 4-cycle-free planar graphs and planar graphs with no adjacent triangles and no triangle sharing an edge with a 5-cycle. For the latter case we strengthen the bound to $\varepsilon = 2/9$.
\end{abstract}

\section{Introduction}\label{introduction-section}

In 1976, Appel and Haken \cite{appel1976every} proved the Four Color Theorem. That same year, Garey, Johnson, and Stockmeyer 
\cite{Garey1976ThreeColNP} showed that determining whether a planar graph is 3-colorable is NP-complete. Since then, sufficient conditions under which planar graphs are 3-colorable have been a topic of much research, specifically for planar graphs in which cycles of particular lengths are forbidden. Note that either 3-cycles or 4-cycles must be forbidden because of $K_4$, which has chromatic number four. It had already been proven by Gr\"{o}tzsch \cite{grotzsch1959zur} in 1959 that triangle-free planar graphs are 3-colorable, so attention was devoted both to strengthening Gr\"{o}tzsch's result and planar graphs that excluded 4-cycles.

Steinberg \cite{steinberg1993stateOf3CP} famously conjectured in 1976 that planar graphs without 4- or 5-cycles are 3-colorable, but Steinberg's Conjecture was disproved by Cohen-Addad, Hebdige, Kr\'{a}l, Li, and Salgado \cite{cohen2017steinberg} in 2017. Prior to that, Erd\H{o}s proposed the following relaxation of this problem: does there exists a constant $C$ such that every planar graph without cycles of length four to $C$ is 3-colorable? In 2005, Borodin, Glebov, Raspaud, and Salavatipour \cite{borodin2005planar} proved that planar graphs without cycles from length four to seven are 3-colorable. It still remains open whether every planar graph without cycles of length four, five, and six is 3-colorable. However, many planar graphs with 4-cycles and other excluded configurations were proven to be 3-colorable \cite{ borodin2003SufficientCodition, borodin2011planar, borodin2010planar, Choi2018precolored9cycle, Dvorak2015precolored8cycle,  La2022extensions, liang2023note, lu2009without479cycles, xu20063}.

Another way to relax Steinberg's Conjecture is to consider so-called \textit{improper colorings}. A graph is \textit{$(c_1, c_2, ..., c_k)$-colorable} if its vertex set can be partitioned into $k$ sets $V_1, V_2, ..., V_k$ such that for every $i \in \{1, ..., k\}$, the maximum degree of the induced subgraph $G[V_i]$ is at most $c_i$. In 2013, Hill and Yu \cite{Hill2013relaxSteinberg} and independently Xu, Miao, and Wang \cite{xu2014every(110)} proved that planar graphs without 4- or 5-cycles are $(1, 1, 0)$-colorable, and Chen, Wang, Liu, and Xu \cite{chen2016planar(200)} proved they are $(2, 0, 0)$-colorable. Others have proved similar results about improperly coloring planar graphs \cite{huang2020every, Huang2022relaxNovosibirsk, huang2019decomposing, Kang2022(100)color, Li2021(200)col, liu2016planar, liu2023(33)-col, miao2018planar, zhang2016planar}. It remains open whether planar graphs without 4- or 5-cycles are $(1,0,0)$-colorable.% is it smith or Miao?

% A graph $G$ is called $k$-choosable if $k$ is a number such that if we give lists of $k$ colors to each vertex of $G$ there is a vertex coloring of $G$ where each vertex receives a color from its assigned list. Erd\H{o}s, Rubin, and Taylor conjectured in 1980 that all planar graphs are 5-choosable \cite{erdos1980choosability}, and Thomassen proved this result in 1994 \cite{thomassen1994every}. Lam, Xu, and Liu later proved in 1999 that planar graphs without 4-cycles are 4-choosable \cite{lam1999-4choosability}. Cheng, Chen, and Wang improved upon that result in 2016 when they proved that planar graphs without 4-cycles adjacent to triangles are 4-chooseable \cite{cheng2016planar}. Other similar contributions to the choosability of planar graphs have been made \cite{kim2018sufficient, zhao2021planar, liu2019withoutadjacent, wang2011planar, yang2023fourchoosability}.

The \textit{fractional chromatic number} $\chi_f(G)$ of a graph $G$ is the infimum of all $a/b$ such that to every vertex of $G$ one can assign a subset of $\{1, 2, ..., a\}$ of size $b$ in such a way that adjacent vertices are assigned disjoint sets. In 1973, before the Four Color Theorem was proven, Hilton, Rado, and Scott \cite{hilton1973a5colour} proved that every planar graph has fractional chromatic number less than five. In 2015 Dvo\v{r}\'{a}k, Sereni, and Volec \cite{dvorak2015fractional} showed that every $n$-vertex triangle-free planar graph has fractional chromatic number at most $3 - 3/(3n + 1)$, which is best possible. Dvo\v{r}\'{a}k and Mnich \cite{dvorak2015fractional} conjectured that every $n$-vertex planar graph of girth at least five has fractional chromatic number at most $3 - \varepsilon$ for some $\varepsilon > 0$. Relaxing Steinberg's Conjecture yet another way, Dvo\v{r}\'{a}k and Hu \cite{dvovrak2019planar} proved in 2019 that planar graphs without 4- or 5-cycles have fractional chromatic number at most $11/3$. Other similar contributions to the fractional chromatic number of planar graphs have been made \cite{cranston2018planar, dvorak2008planar, dvorak2020fractional, pirnazar2002girth, wu2020planar}.

The \textit{independence number} of a graph $G$, denoted $\alpha(G)$, is the size of a largest independent set of $G$. The \textit{independence ratio} of a graph $G$ is $|V(G)|/\alpha(G)$. It is well known that
\begin{equation}\label{independence-fractional-chromatic}
    \frac{|V(G)|}{\alpha(G)} \leq \chi_f(G) \leq \chi(G),
\end{equation}
where $\chi(G)$ is the chromatic number of $G$. Just as it is natural to find the supremum of the chromatic numbers of a class of graphs, it is natural to find the supremum of their independence ratios. If a connected graph $G$ is not a complete graph or an odd cycle, its independence ratio is already bounded from above by the maximum degree of $G$ due to Brooks' Theorem \cite{brooks1941colouring}. Staton \cite{staton1979some} first proved in 1979 that triangle-free graphs with maximum degree three have independence ratio at most $14/5$. Jones \cite{jones1984independence} then proved in 1984 that the supremum of the independence ratio of triangle-free graphs with maximum degree four is $13/4$. Gr\"{o}tzsch's Theorem implies that the independence ratio of triangle-free planar graphs is at most three. Steinberg and Tovey \cite{steinberg1993ramsey} showed in 1993 that every $n$-vertex triangle-free planar graph has an independent set of size $\lfloor n/3 \rfloor + 1$. In 2017, Dvo\v{r}\'{a}k and Mnich \cite{dvorak2017large} improved on that result by proving that $(n + 1)/3$ is a tight lower bound on the independence number of such planar graphs.

We too turn our attention to the independence ratio, but in the context of Steinberg's Conjecture. Note that the result of Dvo\v{r}\'{a}k and Hu \cite{dvovrak2019planar} together with \eqref{independence-fractional-chromatic} implies planar graphs with no 4- or 5-cycles have independence ratio at most $11/3$. However, since Steinberg's Conjecture is false, it is natural to consider forbidding only 4-cycles. So, we pose the following problem.

\begin{problem}\label{sup-4-cycle-free-IR-prob}
    Determine the supremum $\mathscr{A}$ of the independence ratio of 4-cycle-free planar graphs.
\end{problem}

We know that $\mathscr{A} \ge 16/5$ from Figure \ref{fig:probEpsBound}, which displays a 16-vertex graph with independence number five. 
\begin{figure}[h]
    \centering
    \includegraphics[]{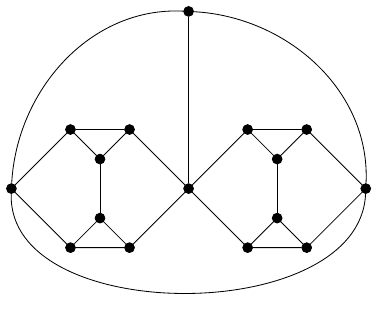}
    \caption{A 4-cycle-free planar graph with independence ratio $16/5$}
    \label{fig:probEpsBound}
\end{figure}

By the Four Color Theorem, $\mathscr{A} \le 4$. To show that $\mathscr{A} < 4$, we prove the following theorem.

\begin{thm}\label{4-cycle-free-IR-thm}
    Every 4-cycle-free planar graph has independence ratio at most $4 - 1/30$.
\end{thm}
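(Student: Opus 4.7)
I would attack this via a discharging argument on a minimum counterexample. First, suppose $G$ is a 4-cycle-free planar graph with $|V(G)|/\alpha(G) > 119/30$, chosen to minimize $|V(G)|$ (or, more flexibly, to minimize $30|V(G)|-119\alpha(G)$ viewed lexicographically). Call a subgraph $H$ on $k$ vertices \emph{reducible} if every independent set of $G-V(H)$ extends to an independent set of $G$ by at least $\lceil 30k/119\rceil$ vertices; then $G-V(H)$ would be a strictly smaller 4-cycle-free planar counterexample, contradicting minimality. The first task is to identify a rich enough list of reducible configurations so that their joint absence forces a structural contradiction.

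I would begin with elementary reductions: $G$ must have minimum degree at least $3$, since a vertex of degree $\leq 2$ is trivially reducible (remove it, extend an independent set by one). Small local structures are then analyzed using the fact that, because $G$ has no 4-cycle, distinct triangles share at most a vertex and no edge lies in two triangles, giving very tight control on neighborhoods. Typical targets are: $3$-vertices in many triangles, adjacent $3$-vertices, triangles whose vertices all have small degree, and triangles incident to $5$-faces. For each such configuration one chooses a specific vertex set $S$ to remove and verifies the extension inequality $\alpha(G) \geq \alpha(G-S) + \lceil 30|S|/119\rceil$, often by a short greedy argument that picks up the required independent vertices inside or near $S$.

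The core of the argument is a discharging phase built on Euler's formula. Using a planar embedding, assign initial charge $\mu(v)=\deg(v)-4$ to each vertex and $\mu(f)=\ell(f)-4$ to each face; then $\sum_v\mu(v)+\sum_f\mu(f) = -8$. The only sinks of negative charge are $3$-vertices and $3$-faces, each of charge $-1$, and since there are no $4$-faces, positive charge comes from $5^+$-faces and from vertices of degree $\geq 5$. Using the forbidden reducible configurations to guarantee enough nearby positive charge, I would write rules that send charge from $5^+$-faces and high-degree vertices into adjacent triangles and low-degree vertices, and then verify face-by-face and vertex-by-vertex that the final charges are all non-negative, contradicting the total $-8$.

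The hard part will be calibrating the reducible configurations against the discharging rules so that both halves meet exactly at the ratio $119/30$. The slack $1/30$ is very small: each reduction must be almost tight, and each discharging rule must distribute charge with matching precision. I expect the technical bottleneck to be the interface between triangles and $5$-faces (the smallest non-triangular face available), because the $5$-face only carries surplus charge $+1$, which must be apportioned among multiple incident triangles and $3$-vertices; bounding the number of such "neighbors" a $5$-face can have without containing a reducible configuration will likely require a delicate case analysis on how triangles cluster around a short face, and this is where the assumption on triangles sharing edges with $4$-cycles (used in the stronger theorem the authors actually prove) would naturally enter.
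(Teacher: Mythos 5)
Your proposal is a strategy outline rather than a proof: the entire substantive content --- the list of reducible configurations, the verification that each one extends independent sets by the required amount, the discharging rules, and the check that their joint absence forces nonnegative total charge --- is deferred, and you acknowledge as much. Beyond incompleteness, there are two concrete problems with the framework as you set it up. First, the degree-$\le 2$ reduction is wrong as stated: removing a single vertex $v$ of degree $\le 2$ and ``extending by one'' fails when the independent set of $G - v$ already contains a neighbor of $v$; you must delete the closed neighborhood $N[v]$ (which does work, since $|N[v]| \le 3$ and $\lceil 90/119 \rceil = 1$). Second, and more seriously, the rigid criterion ``gain at least $\lceil 30k/119 \rceil$ independent vertices per $k$ deleted vertices'' cannot express reductions that appear to be essential at this ratio. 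Deleting a single vertex of degree at least five gains no independent vertices, so it is never reducible in your sense, yet bounding the maximum degree is a key structural step; likewise, deleting an edge-dense cluster is only profitable because many edges disappear with it, and your criterion has no way to bank deleted edges.

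The paper's route is genuinely different and avoids discharging altogether. It first proves a purely combinatorial inductive theorem with no planarity hypothesis: for constants $a,b$ satisfying a short list of linear inequalities, every graph $G$ with no triangle adjacent to a 4-cycle satisfies $\alpha(G) \ge |V(G)| - (a|V(G)| + b|E(G)| + b\lambda(G))$, where $\lambda$ counts certain small ``difficult'' components. The potential $a|V| + b|E| + b\lambda$ supplies exactly the flexibility your criterion lacks: a reduction deleting $N$ vertices and $M$ edges while gaining $A$ independent vertices is admissible whenever $A + (a-1)N + bM \ge b\Lambda$, so edge-heavy deletions with small $A$ (e.g.\ a degree-$5$ vertex, with $A=0$, $N=1$, $M\ge 5$) are fine. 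The reducible configurations drive a minimum counterexample down to a $3$-regular, $K_4$-free graph, which is finished off by Brooks' theorem rather than by discharging. Planarity enters only at the very end through the elementary face-counting bound $|E(G)| \le \tfrac{15}{7}(|V(G)|-2)$ for planar graphs with no triangle adjacent to a 4-cycle; substituting this and $(a,b)=(19/34,\,3/34)$ yields the ratio $4 - 1/30$. If you pursue your direct discharging plan, you will at minimum need to enrich the reducibility criterion to account for deleted edges --- at which point you will essentially be rediscovering the paper's potential function.
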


It would also be interesting to determine the supremum of the fractional chromatic number of 4-cycle-free planar graphs. To that end, we conjecture the following.

\begin{conj}\label{frac-chromatic-conjecture}
    There exists some $\varepsilon > 0$ such that the fractional chromatic number of every 4-cycle-free planar graph is at most $4 - \varepsilon$.
\end{conj}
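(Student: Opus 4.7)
The plan is to reduce the conjecture to a weighted independent-set statement via LP duality. Specifically, $\chi_f(G) \leq 4-\varepsilon$ for a graph $G$ is equivalent to the statement: for every vertex weighting $w: V(G) \to \mathbb{R}_{\geq 0}$, there exists an independent set $I \subseteq V(G)$ with $w(I) \geq w(V(G))/(4-\varepsilon)$. Thus the task is to promote Theorem \ref{4-cycle-free-IR-thm}, which handles the uniform weighting $w \equiv 1$, to arbitrary nonnegative weights on 4-cycle-free planar graphs. Note that a direct ``cloning'' reduction fails: duplicating a vertex $v$ into twin copies creates a 4-cycle through any two nonadjacent neighbors of $v$, so the graph class is not closed under the most convenient weight-encoding.

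I would mirror the overall strategy of the proof of Theorem \ref{4-cycle-free-IR-thm} while carrying weights throughout. Consider a minimum counterexample $(G, w)$: a 4-cycle-free planar graph $G$ with a weighting $w \geq 0$ admitting no independent set of weight at least $w(V(G))/(4-\varepsilon)$, chosen with $|V(G)|$ minimum (ties broken, say, by maximizing $|E(G)|$). After deleting zero-weight vertices, one would catalogue a list of \emph{weighted reducible configurations}: subgraphs $H \subseteq G$ for which there is a smaller weighted graph $(G', w')$, obtained from $G$ by deleting some vertices of $H$ and redistributing their $w$-weight to surviving neighbors, such that a maximum-weight independent set of $(G', w')$ extends, together with a carefully chosen subset of $V(H)$, to an independent set of $(G, w)$ of weight at least $w(V(G))/(4-\varepsilon)$, contradicting minimality. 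A discharging argument on the plane embedding of $G$ would then show that any 4-cycle-free planar graph avoiding all these reducible configurations violates Euler's formula.

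The main obstacle is the weighted reducibility step. In the unweighted case, reducing a $k$-vertex configuration $H$ only requires accounting for $k$ against the density gained on the remaining $|V(G)| - k$ vertices; in the weighted setting, $w(V(H))$ may be an arbitrarily large fraction of $w(V(G))$, so the redistribution must simultaneously (i) strictly decrease $|V(G')|$, (ii) preserve planarity and 4-cycle-freeness, and (iii) leave room to extend the inductive independent set into $V(H)$ with enough weight. Configurations that are reducible in the unweighted proof will typically become reducible in the weighted setting only after additional structural lemmas---typically asserting that in a minimum counterexample the weights near a reducible configuration are forced to be balanced in a precise quantitative sense, so that no single vertex can hoard the weight in a way that defeats the redistribution. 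A secondary difficulty is that the $\varepsilon$ extracted this way may be substantially smaller than $1/30$; however, Conjecture \ref{frac-chromatic-conjecture} only requires \emph{some} positive $\varepsilon$, so any constant loss in the weighted analysis is acceptable.
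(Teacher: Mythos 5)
The statement you are addressing is Conjecture~\ref{frac-chromatic-conjecture}, which the paper explicitly leaves open; there is no proof of it in the paper to compare against, and your proposal does not close it either. What you have written is a research program, not a proof: you correctly reformulate $\chi_f(G)\le 4-\epsilon$ via LP duality as a statement about weighted independent sets, you correctly observe that the standard cloning reduction destroys 4-cycle-freeness, and you correctly identify that the missing ingredient is a weighted analogue of the reducible-configuration analysis. But you then stop exactly at that missing ingredient. No weighted reducible configuration is exhibited, no weighted analogue of Claim~\ref{main-inequality} is formulated, no discharging rules are given, and no $\epsilon$ is produced. The sentence asserting that ``the weights near a reducible configuration are forced to be balanced in a precise quantitative sense'' is the entire content of the would-be proof, and it is asserted without any argument; it is not at all clear that such balancing lemmas exist, and this is plausibly the very reason the authors state the fractional version only as a conjecture while proving the independence-ratio version.

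To see concretely why the gap is not cosmetic: the engine of the paper's Theorem~\ref{technical-thm} is the inequality $A + (a-1)N + bM \ge b\Lambda$, where $A$ counts how many \emph{vertices} of the deleted set $X$ can be added to an inherited independent set, and $N$, $M$ count deleted vertices and edges. Every reducible configuration is verified by comparing these integer counts against the linear constraints \eqref{deg-1-equation}--\eqref{3-reg-equation}. In the weighted setting the gain from extending into $X$ is $w$ of the chosen vertices, which can be an arbitrarily small fraction of $w(X)$ (e.g., all the weight of a triangle component could sit on the two vertices you do not pick), so the same configurations are simply not reducible without further argument. Until you either prove the balancing lemmas you allude to or find a different reduction, the conjecture remains open.
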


We say two cycles are \textit{adjacent} if they have a common edge. In 2003, before Steinberg's Conjecture was disproved, Borodin and Raspaud \cite{borodin2003SufficientCodition} posed the Strong Bordeaux Conjecture stating that every planar graph without adjacent 3-cycles and without 5-cycles is 3-colorable, which would imply Steinberg's Conjecture. Borodin, Glebov, Jensen and Raspaud \cite{borodin2010planar} then posed the Novosibirsk Conjecture in 2006, which states that every planar graph without 3-cycles adjacent to cycles of length three or five is 3-colorable. A proof of the Novosibirsk Conjecture would imply the Strong Bordeaux Conjecture and Gr\"{o}tzsch's Theorem. However, since Steinberg's Conjecture is false, the Strong Bordeaux and Novosibirsk Conjecture are false as well. Nevertheless, it is natural to ask the analogue of Problem \ref{sup-4-cycle-free-IR-prob} for such graphs. We show that planar graphs without 3-cycles adjacent to cycles of length three or five have independence ratio less than four by proving the following theorem.

\begin{thm}\label{Novosibirsk-IR-thm}
    Every planar graph without a triangle adjacent to a cycle of length three or five has independence ratio at most $4 - 2/9$.
\end{thm}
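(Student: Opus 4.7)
The plan is to prove Theorem \ref{Novosibirsk-IR-thm} by a discharging argument applied to a minimum counterexample. Let $G$ be a planar graph in which no triangle is adjacent to a cycle of length $3$ or $5$, chosen with $|V(G)|$ minimum subject to $\alpha(G) < 9|V(G)|/34$. I would begin by observing that the hypothesis forbids more than it names: if a triangle $xyz$ shared an edge $xy$ with a $4$-cycle $xyuw$ and $z \notin \{u,w\}$, then $z,y,u,w,x,z$ would trace a $5$-cycle sharing the edges $xz$ and $yz$ with the triangle; and if $z \in \{u,w\}$, two triangles would share an edge. Either is forbidden, so no triangle in $G$ is adjacent to a cycle of length $3$, $4$, or $5$, and consequently every face of $G$ that shares an edge with a triangle has length at least $6$. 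This incidentally shows that the hypothesis of Theorem \ref{4-cycle-free-IR-thm} holds, consistent with the remark in the abstract.

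By the minimality of $G$, there is no subset $W \subseteq V(G)$ containing an independent set $S \subseteq W$ with $N_G(S) \subseteq W$ and $|W|/|S| \le 34/9$; otherwise, the induction hypothesis applied to $G - W$ would yield an independent set of size at least $9(|V(G)|-|W|)/34$ which, combined with $S$, would contradict the choice of $G$. From this principle I would extract the usual reducibility list, beginning with $\delta(G) \ge 3$, then ruling out short paths and pairs of non-adjacent low-degree vertices with many common neighbors, and finally a family of configurations built around triangles and the $\ge 6$-faces that necessarily border them. The exact list is tuned to match the discharging below.

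For the discharging, I would assign initial charges $\mu(v) = \deg(v) - 4$ and $\mu(f) = \ell(f) - 4$, so that $\sum \mu = -8$ by Euler's formula, and then redistribute charge from the $\ge 6$-faces surrounding triangles (each of charge at least $2$) and from high-degree vertices toward $3$-faces and degree-$3$ vertices, with rules calibrated so that every object finishes with nonnegative charge in the absence of a reducible configuration, contradicting $\sum \mu = -8$. The main obstacle, I expect, is matching the reducibility list against the discharging rules: the threshold $34/9 \approx 3.78$ in the reducibility inequality is strictly \emph{tighter} than the corresponding $119/30 \approx 3.97$ for Theorem \ref{4-cycle-free-IR-thm}, so the improvement in $\varepsilon$ must be wrung entirely from the strengthened hypothesis, through careful local analysis around each triangle and the large faces bordering it. The bulk of the technical work should lie in enumerating reducible configurations and verifying, case by case, that the discharging succeeds.
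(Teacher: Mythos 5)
Your opening observation is correct and matches a remark in the paper: a triangle sharing an edge with a $4$-cycle forces either two triangles sharing an edge or a triangle sharing an edge with a $5$-cycle, so the hypothesis of Theorem~\ref{Novosibirsk-IR-thm} implies that of Theorem~\ref{triangle-bordering-4-cycle-IR-thm}. Your reduction principle (delete a set $W$ containing an independent set $S$ with $N_G(S)\subseteq W$ and $|W|/|S|\le 34/9$) is also sound. But everything after that is a plan rather than a proof: you never exhibit a single reducible configuration beyond $\delta(G)\ge 3$, never state a discharging rule, and never verify that the final charges are nonnegative. You acknowledge this yourself (``the exact list is tuned to match the discharging below,'' ``the bulk of the technical work should lie in enumerating reducible configurations and verifying, case by case''), but that technical work \emph{is} the theorem; as written, the argument establishes nothing. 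This is a genuine gap, not a presentational one: with the threshold $34/9$ sitting exactly at the extremal density of the class, there is no a priori guarantee that a face-based discharging from $\mu(v)=\deg(v)-4$, $\mu(f)=\ell(f)-4$ can be closed against any reducibility list, and the claim that it can must be demonstrated, not asserted.

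For comparison, the paper avoids a monolithic discharging by splitting the work in two. First, Theorem~\ref{technical-thm} is a purely combinatorial statement (no planarity): for any graph with no triangle adjacent to a $4$-cycle and suitable constants $a,b$, one has $\alpha(G)\ge |V(G)|-(a|V(G)|+b|E(G)|+b\lambda(G))$, where the $\lambda$ term corrects for triangle components and $2$-chains of triangles. This is proved by a minimal-counterexample induction whose ``potential'' is weighted over vertices \emph{and} edges (Claim~\ref{main-inequality}), which is strictly more flexible than your pure ratio induction: deleting a set $X$ earns credit proportional to the number of incident edges removed, so high-degree configurations are reducible even when $|X|/|S|$ alone would not be small enough. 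Second, planarity enters only through Lemma~\ref{edge-bound-2}, an Euler-formula count showing $|E(G)|\le 2(n-2)$ for this class; substituting $a=19/34$, $b=3/34$ then yields the ratio $34/9=4-2/9$ immediately. If you want to salvage your approach, the honest path is to either carry out the full configuration/discharging analysis, or adopt the paper's decoupling and prove the vertex-edge-weighted inequality, where the local case analysis is independent of the planar embedding.
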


It would also be interesting to prove Conjecture \ref{frac-chromatic-conjecture} for planar graphs with no triangle adjacent to a cycle of length three or five. 

We derive both Theorem \ref{4-cycle-free-IR-thm} and Theorem \ref{Novosibirsk-IR-thm} from a more general result. To that end, we introduce the following terminology. We say a component of a graph $G$ is a \textit{2-chain of triangles} if it is isomorphic to the graph formed by adding an edge between two disjoint triangles. A component $C$ of $G$ is said to be \textit{difficult} if $C$ is a triangle or a 2-chain of triangles. We define $\lambda(G)$ to be the number of difficult components of $G$. We prove the following in Section \ref{technical-thm-section}.

\begin{thm}\label{technical-thm}
    If $G$ is a graph without a triangle adjacent to a 4-cycle and $a, b$ are positive constants satisfying inequalities \eqref{deg-1-equation}--\eqref{3-reg-equation}, then $\alpha(G) \geq |V(G)| - (a|V(G)| + b|E(G)| + b\lambda(G))$.
\end{thm}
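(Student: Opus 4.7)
The plan is to proceed by minimum counterexample: let $G$ be a counterexample on the fewest vertices. Since $\alpha$, $|V|$, $|E|$, and $\lambda$ are additive over connected components, we may assume $G$ is connected. First I would dispose of the base cases---an isolated vertex, a single edge, a triangle, and a 2-chain of triangles---by verifying the desired inequality directly from the hypotheses on $a$ and $b$. The role of the $b\lambda(G)$ term becomes clear here: it is engineered precisely so that the triangle and the 2-chain of triangles (the two "difficult" connected components, each with independence ratio $3$) just fit the bound.

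The next phase is a sequence of reducibility lemmas that eliminate small-degree vertices. For a vertex $v$ of degree $1$, of degree $2$ with various neighborhood configurations, or of degree $3$ in certain local arrangements, I would choose a small deletion set $S\subseteq V(G)$ and show that $G' = G-S$ still lacks a triangle adjacent to a $4$-cycle, apply the induction hypothesis to $G'$, and extend a max independent set of $G'$ back to an independent set of $G$ large enough to satisfy the inequality. The numerical check in each case has the form
\begin{equation*}
a \cdot \Delta|V| \;+\; b \cdot \Delta|E| \;+\; b \cdot \Delta\lambda \;\geq\; (\text{number of uncovered vertices}),
\end{equation*}
which is exactly the content of one of \eqref{deg-1-equation}--\eqref{3-reg-equation}. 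The analogous argument with $S=\{v\}$ rules out a vertex $v$ of degree at least $4$, since the freed edge budget $b \cdot d(v)$ easily absorbs the at most one unit loss to $\alpha$ coming from omitting $v$.

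After these reductions $G$ should be close to $3$-regular, and this is where I expect the main difficulty to lie. Here no single-vertex removal can succeed, so I would search for a slightly larger reducible configuration (a triangle, a short path, or a vertex together with one of its second neighbors) whose deletion allows induction to close. The no-triangle-adjacent-to-$4$-cycle hypothesis becomes essential at this stage: it tightly constrains the local structure around any triangle in $G$, rules out pathological configurations in which removing a neighborhood creates new difficult components, and ensures the extension of the independent set from $G'$ back to $G$ is not obstructed. The main obstacle is to enumerate these final reducible configurations and prove that at least one must appear in any $3$-regular (or near-$3$-regular) graph satisfying the hypothesis, while simultaneously tracking the numerical constraints on $a, b$ that each configuration imposes---this is the step that pins down the exact inequalities \eqref{deg-1-equation}--\eqref{3-reg-equation}, and where the delicate structural analysis enabled by the hypothesis does the real work.
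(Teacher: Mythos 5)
Your overall architecture matches the paper's: minimum counterexample, a general accounting inequality for deleting a set $X$ (your displayed condition is exactly the paper's Claim~\ref{main-inequality} up to sign conventions on $\Delta\lambda$), base cases that explain the $b\lambda(G)$ term, and a cascade of reducible configurations driving the minimum degree up to $3$. However, there is a concrete error at the high-degree end. You assert that deleting a single vertex $v$ of degree at least $4$ closes the induction because ``the freed edge budget $b\cdot d(v)$ easily absorbs'' the loss. The relevant inequality for that move (with $A=0$, $N=1$, $M=d(v)$, $\Lambda=0$) is $a + d(v)\,b \geq 1$, and the hypotheses only supply \eqref{deg-5-equation}, i.e.\ $a+5b\geq 1$; for the admissible values $a=19/34$, $b=3/34$ used later one has $a+4b = 31/34 < 1$. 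So degree-$4$ vertices are \emph{not} reducible this way, and in fact handling them is the bulk of the paper's proof: it removes the closed neighborhood of a degree-$3$ vertex with a degree-$4$ neighbor, and then of a degree-$4$ vertex with all neighbors of degree $4$, and must carefully bound how many new difficult components (each worth $b\Lambda$) such a deletion can create, using the no-triangle-adjacent-to-$4$-cycle hypothesis together with the fact that every surviving triangle sends at least three edges out. This is where inequalities \eqref{deg-4-deg-3-nbr-equation}--\eqref{deg-4-deg-4-nbr-equation} are consumed, and your proposal has no mechanism for it.

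Relatedly, you locate the ``main difficulty'' in the $3$-regular case and propose hunting for further reducible configurations there, but the paper's endgame is a one-liner: once $G$ is $3$-regular and $K_4$-free (no adjacent triangles), Brooks' Theorem gives a $3$-colouring, hence an independent set of size $|V(G)|/3$, and taking $X=V(G)$ in the accounting inequality contradicts \eqref{3-reg-equation}, which is precisely $a+\tfrac{3}{2}b\geq\tfrac{2}{3}$. Without that global step (or some substitute such as Staton-type bounds), your plan does not terminate: there is no reason a $3$-regular graph with the given hypothesis must contain one of a finite list of small reducible configurations. In short: the skeleton is right, but the degree-$4$ analysis is missing (and the single-vertex shortcut you propose is numerically false), and the $3$-regular conclusion needs the Brooks' Theorem idea rather than further local reductions.
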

\begin{align}
    2a + b &\geq 1 \label{deg-1-equation}\\
    3a + 4b &\geq 2 \label{deg-2-equation}\\
    9a + 11b &\geq 6 \label{3-chain-equation}\\
    a + 5b &\geq 1 \label{deg-5-equation}\\
    4a + 9b &\geq 3 \label{deg-4-deg-3-nbr-equation}\\
    10a + 16b &\geq 7 \label{deg-4-deg-3-nbr-difficult-equation}\\
    13a + 20b &\geq 8 \label{deg-4-deg-3-nbr-2-difficult-equation}\\
    5a + 14b &\geq 4 \label{deg-4-deg-4-nbr-equation}\\
    a + \frac{3}{2}b &\geq \frac{2}{3} \label{3-reg-equation}
\end{align}
% \begin{equation}\label{deg-1-equation}
%     2a + b \geq 1
% \end{equation}
% \begin{equation}\label{deg-2-equation}
%     3a + 4b \geq 2
% \end{equation}
% \begin{equation}\label{3-chain-equation}
%     9a + 11b \geq 6
% \end{equation}
% \begin{equation}\label{deg-5-equation}
%     a + 5b \geq 1
% \end{equation}
% \begin{equation}\label{deg-4-deg-3-nbr-equation}
%     4a + 9b \geq 3
% \end{equation}
% \begin{equation}\label{deg-4-deg-3-nbr-difficult-equation}
%     10a + 16b \geq 7
% \end{equation}
% \begin{equation}\label{deg-4-deg-3-nbr-2-difficult-equation}
%     13a + 20b \geq 8
% \end{equation}
% \begin{equation}\label{deg-4-deg-4-nbr-equation}
%     5a + 14b \geq 4
% \end{equation}
% \begin{equation}\label{3-reg-equation}
%     a + \frac{3}{2}b \geq \frac{2}{3}
% \end{equation}

In fact, Theorem \ref{technical-thm} holds with \eqref{deg-2-equation} and \eqref{deg-4-deg-3-nbr-equation}--\eqref{3-reg-equation} omitted. See Section \ref{conclusion-section} for more details. Note that in Theorem \ref{technical-thm}, we do not require $G$ to be planar. In Section \ref{density-4-cycle-free-section} we show that the supremum of the average degree of planar graphs without a triangle adjacent to a 4-cycle is $30/7$. We use this result along with Theorem \ref{technical-thm} to prove the following.

\begin{thm}\label{triangle-bordering-4-cycle-IR-thm}
    Every planar graph without a triangle adjacent to a 4-cycle has independence ratio at most $4 - 1/30$.
\end{thm}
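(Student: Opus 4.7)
The plan is to combine Theorem~\ref{technical-thm} with the density bound from Section~\ref{density-4-cycle-free-section}. Since the supremum of the average degree of planar graphs without a triangle adjacent to a 4-cycle is $30/7$, every such graph $G'$ satisfies $|E(G')| \leq (15/7)|V(G')|$. For the bound $\alpha(G) \geq |V(G)|(1-a) - b|E(G)| - b\lambda(G)$ of Theorem~\ref{technical-thm} to yield $\alpha(G)/|V(G)| \geq 30/119 = 1/(4 - 1/30)$ once the edge count is substituted, we need to choose $a,b$ satisfying all of \eqref{deg-1-equation}--\eqref{3-reg-equation} with $a + 15b/7 \leq 89/119$.

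Setting $a = 19/34$ and $b = 3/34$ does the job: a direct substitution shows that \eqref{3-chain-equation}, \eqref{deg-5-equation}, and \eqref{deg-4-deg-3-nbr-difficult-equation} all hold with equality, while \eqref{deg-1-equation}, \eqref{deg-2-equation}, \eqref{deg-4-deg-3-nbr-equation}, \eqref{deg-4-deg-3-nbr-2-difficult-equation}, \eqref{deg-4-deg-4-nbr-equation}, and \eqref{3-reg-equation} hold strictly; and $a + 15b/7 = 133/238 + 45/238 = 89/119$ exactly. This numerical matching of the density bound with three of the inequalities is the crux of the argument, and I expect it is what dictated the form of the inequalities in Theorem~\ref{technical-thm}.

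To handle the $b\lambda(G)$ penalty, I would partition the components of $G$ into the set of difficult components $G_1$ and the remaining components $G_2$, with $n_i = |V(G_i)|$ and $m_i = |E(G_i)|$. Both a triangle and a 2-chain of triangles have independence ratio exactly $3$, so $\alpha(G_1) \geq n_1/3$. Since $G_2$ has no difficult component, $\lambda(G_2) = 0$, and applying Theorem~\ref{technical-thm} with the chosen $(a,b)$ gives
\[
\alpha(G_2) \;\geq\; n_2(1-a) - b m_2 \;\geq\; n_2\!\left(1 - a - \tfrac{15}{7}b\right) \;=\; \tfrac{30}{119}\,n_2,
\]
where I used the density bound applied to $G_2$ (which inherits both planarity and the forbidden adjacency, so it is still in the class). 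Since $1/3 \geq 30/119$, also $\alpha(G_1) \geq (30/119)n_1$, and summing yields $\alpha(G) \geq (30/119)|V(G)|$, giving independence ratio at most $119/30 = 4 - 1/30$.

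The genuinely hard work lies elsewhere in the paper: Theorem~\ref{technical-thm} itself and the sharp $30/7$ average-degree bound of Section~\ref{density-4-cycle-free-section} each require serious effort. Given those inputs, the only subtle point in the present argument is the need to isolate the difficult components before applying the density bound, since otherwise the $b\lambda(G)$ term would force a weaker ratio; everything else is routine arithmetic.
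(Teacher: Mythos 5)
Your proposal is correct and follows essentially the same route as the paper: the same constants $a=19/34$, $b=3/34$, the same use of the $|E(G)|\leq \tfrac{15}{7}|V(G)|$ density bound from Lemma~\ref{edge-bound}, and the same device of splitting off the difficult components (where the independence ratio is $3$) before applying Theorem~\ref{technical-thm} with $\lambda=0$. Your observation about which of \eqref{deg-1-equation}--\eqref{3-reg-equation} are tight is accurate and is indeed what pins down the choice of $(a,b)$.
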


Theorem \ref{triangle-bordering-4-cycle-IR-thm} immediately implies Theorem \ref{4-cycle-free-IR-thm}. Note that a graph with a triangle adjacent to a 4-cycle contains either adjacent triangles or a triangle adjacent to a 5-cycle, so Theorem \ref{triangle-bordering-4-cycle-IR-thm} also implies a weaker version of Theorem \ref{Novosibirsk-IR-thm}. In Section \ref{density-novosibirsk-section}, we show that the supremum of the average degree of planar graphs without 3-cycles adjacent to cycles of length three or five is four. We use this result along with Theorem \ref{technical-thm} to prove Theorem \ref{Novosibirsk-IR-thm}.

\section{Preliminaries}
In this section we provide a brief overview of notation that will be used throughout the paper. All graphs are simple and undirected. A graph is said to be \textit{$i$-cycle-free} if it does not contain a cycle on $i$ vertices as a subgraph (not necessarily induced). We use $V(G)$ and $E(G)$ to denote the set of vertices and edges of a graph $G$ respectively. If a graph $G$ is embedded in the plane, we use $F(G)$ to denote its faces and $\ell(f)$ to denote the length of a face $f \in F(G)$. For $X \subseteq V(G)$, we denote by $G[X]$ the graph $G$ induced on $X$. We denote by $G \setminus X$ the graph obtained from $G$ by deleting the vertices in $X$. For $v \in V(G)$, we use $d_G(v)$ and $N_G(v)$ to denote the degree and neighborhood of $v$ respectively. We may drop the subscript if the graph $G$ is clear from context. For $H$ a subgraph of a graph $G$, we use $\Phi(H)$ to denote the set of edges with one end in $V(H)$ and one end in $V(G) \setminus V(H)$. We use $\alpha(G)$ to denote the size of a largest independent set in $G$.

\section{Proof of Theorem \ref{technical-thm}}\label{technical-thm-section}
Suppose towards a contradiction that Theorem \ref{technical-thm} is false, and let $G$ be a counterexample with $|V(G)|$ minimum. We proceed with a series of claims. Our first claim is the following inductive lemma which is analogous to \cite[Claim 1]{HeckmanThomas01}.

\begin{claim}\label{main-inequality}
    Let $X \subseteq V(G)$ be nonempty, and let $G' = G \setminus X$. If every independent set $I'$ in $G'$ can be extended to an independent set in $G$ of size at least $|I'| + A$ for some non-negative integer $A$, then $$A + (a - 1)N + bM < b\Lambda,$$ where $N = |X|, M = |E(G)| - |E(G')|, \text{and } \Lambda = \lambda(G') - \lambda(G)$.  
\end{claim}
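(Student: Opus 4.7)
The plan is to derive the inequality by comparing the bound on $\alpha(G')$ guaranteed by minimality of $G$ with the failure of the bound on $\alpha(G)$, connected via the extension hypothesis. Since $X$ is nonempty, $|V(G')| < |V(G)|$, so by the minimality of $G$ as a counterexample, Theorem \ref{technical-thm} holds for $G'$. Therefore
\[
\alpha(G') \geq |V(G')| - \bigl(a|V(G')| + b|E(G')| + b\lambda(G')\bigr).
\]
Note that $G'$ is a subgraph of $G$, hence it contains no triangle adjacent to a 4-cycle, so the hypotheses of Theorem \ref{technical-thm} apply to $G'$.

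Next I would take an independent set $I'$ in $G'$ of size $\alpha(G')$ and apply the hypothesis of the claim to extend it to an independent set $I$ in $G$ with $|I| \geq |I'| + A$. Since $G$ itself is a counterexample, we have
\[
\alpha(G) < |V(G)| - \bigl(a|V(G)| + b|E(G)| + b\lambda(G)\bigr).
\]
Combining these two bounds with the relation $\alpha(G) \geq \alpha(G') + A$ yields
\[
|V(G')| - a|V(G')| - b|E(G')| - b\lambda(G') + A < |V(G)| - a|V(G)| - b|E(G)| - b\lambda(G).
\]

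The last step is bookkeeping: substituting $|V(G)| - |V(G')| = N$, $|E(G)| - |E(G')| = M$, and $\lambda(G') - \lambda(G) = \Lambda$, the terms $|V(G)| - a|V(G)| - b|E(G)| - b\lambda(G)$ cancel and the remaining expression rearranges to $A + (a-1)N + bM < b\Lambda$, as desired. There is no real obstacle here; the argument is purely a routine comparison of the inductive bound with the assumed failure of the bound on $G$, and the extension hypothesis is exactly what lets us propagate $\alpha(G')$ into a lower bound on $\alpha(G)$. The only subtlety worth flagging is that the quantity $\Lambda$ can be nonzero (for instance, deleting a few vertices can create many new difficult components), so the $b\lambda$ term has to be tracked carefully on both sides; but once one writes out the two inequalities and cancels, the claim drops out immediately.
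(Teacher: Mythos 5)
Your argument is correct and is essentially the paper's proof: the paper simply states that $G'$ satisfies the theorem by minimality and that the negation of the claimed inequality would make $G$ satisfy it too, which is exactly the comparison you carry out explicitly. The algebra and the use of the extension hypothesis to get $\alpha(G) \geq \alpha(G') + A$ all check out.
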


\begin{proof}
    % The graph $G'$ satisfies the conclusion of the theorem by the minimality of $|V(G)|$. If $A + (a - 1)N + bM \geq b\Lambda$, then so does $G$, a contradiction.
    The graph $G'$ satisfies the conclusion of Theorem \ref{technical-thm} by the minimality of $|V(G)|$. Therefore, 
    \begin{align*}
        \alpha(G) &< |V(G)| - (a|V(G)| + b|E(G)| + b\lambda(G)) \\
        \intertext{\parbox{\linewidth}{\centering and}}
        \alpha(G') &\geq |V(G')| - (a|V(G')| + b|E(G')| + b\lambda(G'))
    \end{align*}
    Suppose that every independent set $I'$ in $G'$ can be extended to an independent set in $G$ of size at least $|I'| + A$ for some non-negative integer $A$. That means $\alpha(G) - \alpha(G') \ge A$, so by combining this with the above inequalities, we get
    \begin{equation*}
        A \le \alpha(G) - \alpha(G') < N - aN - bM + b\Lambda,
    \end{equation*}
    and the result follows by rearranging terms.
\end{proof}

Our remaining claims show the existence of a set of reducible configurations, with at least one configuration appearing in $G$. In these claims, we specify the set $X$ for which we apply Claim \ref{main-inequality}. The choice of $X$ determines $A$, $N$, $M$, and $\Lambda$.

\begin{claim}\label{min-degree-1}
    The minimum degree of $G$ is at least one.
\end{claim}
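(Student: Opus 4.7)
The plan is to apply Claim~\ref{main-inequality} directly with $X = \{v\}$ for a hypothetical isolated vertex $v$. Suppose for contradiction that $G$ contains a vertex $v$ of degree zero. Set $G' = G \setminus \{v\}$. Since $v$ has no neighbors, any independent set $I'$ in $G'$ extends to the independent set $I' \cup \{v\}$ in $G$, so in the notation of Claim~\ref{main-inequality} we may take $A = 1$.

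Next I would compute the remaining parameters. Clearly $N = |X| = 1$ and $M = |E(G)| - |E(G')| = 0$ because deleting an isolated vertex removes no edges. The key observation is that $\Lambda = \lambda(G') - \lambda(G) = 0$: the singleton component $\{v\}$ is neither a triangle nor a 2-chain of triangles, so it does not count toward $\lambda(G)$, and removing $v$ does not alter any other component of $G$. Thus the count of difficult components is unchanged.

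Plugging into the inequality from Claim~\ref{main-inequality}, we obtain
\[
1 + (a - 1)\cdot 1 + b \cdot 0 \;<\; b \cdot 0,
\]
which simplifies to $a < 0$. This contradicts the hypothesis that $a$ is a positive constant, establishing the claim.

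The proof is essentially immediate once the right $X$ is chosen; there is no real obstacle. The only thing worth double-checking is the claim that $\Lambda = 0$, which rests on the precise definition of \emph{difficult component} (triangle or 2-chain of triangles) excluding isolated vertices.
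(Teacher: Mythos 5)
Your proposal is correct and follows exactly the same route as the paper: take $X=\{v\}$, note $A=N=1$ and $M=\Lambda=0$, and derive $a<0$ from Claim~\ref{main-inequality}, contradicting positivity of $a$. Your explicit justification that $\Lambda=0$ (an isolated vertex is not a difficult component) is a detail the paper leaves implicit, but the argument is identical.
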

\begin{proof}
    Suppose towards a contradiction that $G$ contains an isolated vertex $v$. Let $X = \{v\}$, so $A = N = 1$ and $M = \Lambda = 0$. Claim \ref{main-inequality} implies
    \begin{equation*}
        1 + (a - 1) < 0.
    \end{equation*}
    However, this contradicts that $a$ is positive.
\end{proof}

\begin{claim}\label{min-degree-2}
    The minimum degree of $G$ is at least two.
\end{claim}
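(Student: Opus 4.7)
The plan is to reduce a putative degree-$1$ vertex together with its neighbor. Suppose for contradiction that $G$ contains a vertex $v$ of degree $1$, and let $u$ be its unique neighbor. I would apply Claim~\ref{main-inequality} with $X = \{v, u\}$, so that $N = 2$ and $M = \deg_G(u)$ (the edges incident to $u$ in $G$, including $uv$). As I justify below, one can take $A = 1$ and verify $\Lambda \leq \deg_G(u) - 1$. Then
\begin{equation*}
    A + (a - 1)N + bM = 2a - 1 + b\deg_G(u) \geq 2a - 1 + b(\Lambda + 1) = (2a + b - 1) + b\Lambda \geq b\Lambda,
\end{equation*}
where the final inequality is \eqref{deg-1-equation}. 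This contradicts Claim~\ref{main-inequality} and completes the proof.

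To see $A = 1$ is admissible, I would observe that for any independent set $I'$ in $G' = G \setminus \{v, u\}$, the set $I' \cup \{v\}$ is independent in $G$, since $v$'s only neighbor $u$ lies outside $V(G')$. To bound $\Lambda = \lambda(G') - \lambda(G)$, I would argue that the component $C$ of $v$ in $G$ has minimum degree $1$ (witnessed by $v$), and hence is neither a triangle nor a 2-chain of triangles; so $C$ contributes $0$ to $\lambda(G)$. Components of $G$ disjoint from $\{v, u\}$ are unchanged in $G'$. Thus $\Lambda$ equals the number of difficult components of $G'$ contained in $V(C) \setminus \{v, u\}$, which is at most the number of connected pieces into which $C \setminus \{v, u\}$ splits. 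Since $v$ is a leaf of $C$, the subgraph $C \setminus \{v\}$ is still connected, and removing $u$ from it yields at most $\deg_G(u) - 1$ components, one per remaining neighbor of $u$.

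The main subtlety I anticipate is the bound on $\Lambda$: one must simultaneously note that $C$ cannot have been a difficult component (so nothing is \emph{lost} from $\lambda$ when $C$ is dismantled) and carefully count how many new difficult components can appear when $u$ is excised, giving the $\deg_G(u) - 1$ bound. Once this is in hand, the remainder is arithmetic in Claim~\ref{main-inequality} using only inequality~\eqref{deg-1-equation}.
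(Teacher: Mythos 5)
Your proposal is correct and follows exactly the paper's argument: the same choice $X=\{v,u\}$ with $A=1$, $N=2$, $M=d_G(u)$, $\Lambda\le d_G(u)-1$, and the same appeal to inequality \eqref{deg-1-equation}. The extra justification you give for $A=1$ and the bound on $\Lambda$ is sound and merely makes explicit what the paper leaves implicit.
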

\begin{proof}
    Suppose towards a contradiction that $G$ contains a vertex $v$ of degree at most one. By Claim \ref{min-degree-1}, $d_G(v) = 1$. Let $X$ consist of $v$ and its neighbor $u$. Note that $A = 1$, $N = 2$, $M = d_G(u)$, and $\Lambda \leq d_G(u) - 1$. Thus, Claim \ref{main-inequality} implies
    \begin{equation*}
        1 + 2(a - 1) + bM < b(M - 1).
    \end{equation*}
    However, this contradicts \eqref{deg-1-equation}.
\end{proof}

\begin{claim}\label{lambda-equals-zero}
    $\lambda(G) = 0.$
\end{claim}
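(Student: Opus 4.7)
The plan is to suppose for contradiction that $\lambda(G) \geq 1$, pick a difficult component $C$, set $X = V(C)$, and apply Claim \ref{main-inequality}. Since $C$ is a component of $G$, it is disconnected from $G' = G \setminus X$, so any independent set $I'$ in $G'$ can be extended by an independent set of $C$ of size $\alpha(C)$; hence $A = \alpha(C)$. Moreover $\lambda(G') = \lambda(G) - 1$ (removing $C$ eliminates exactly one difficult component and, since $C$ is already a whole component, cannot create new ones), so $\Lambda = -1$.

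There are two cases according to the two types of difficult components. First, if $C$ is a triangle, then $A = 1$, $N = 3$, $M = 3$. Claim \ref{main-inequality} gives
\begin{equation*}
    1 + 3(a-1) + 3b < -b,
\end{equation*}
which simplifies to $3a + 4b < 2$, contradicting \eqref{deg-2-equation}. Second, if $C$ is a 2-chain of triangles, then $C$ has $6$ vertices, $7$ edges, and independence number $2$ (pick one non-bridge-endpoint from each triangle), so $A = 2$, $N = 6$, $M = 7$. Claim \ref{main-inequality} gives
\begin{equation*}
    2 + 6(a-1) + 7b < -b,
\end{equation*}
which again simplifies to $3a + 4b < 2$, contradicting \eqref{deg-2-equation}.

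Since every difficult component falls into one of these two cases, in both we reach a contradiction, so $\lambda(G) = 0$. The argument is entirely routine — the only real choices are identifying $\alpha(C)$ correctly in each case and verifying that removing an entire component $C$ contributes $\Lambda = -1$ rather than creating new difficult components elsewhere; neither presents a genuine obstacle, since $C$ is isolated from the rest of $G$.
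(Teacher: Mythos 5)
Your proof is correct and follows essentially the same approach as the paper: delete the difficult component, compute $A, N, M, \Lambda$ for each of the two cases, and derive a contradiction with \eqref{deg-2-equation} via Claim \ref{main-inequality}. The values and resulting inequalities match the paper's exactly.
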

\begin{proof}
    Suppose towards a contradiction that $\lambda(G) \neq 0$. That means at least one of the components of $G$ is a triangle or 2-chain of triangles. If $G$ contains a triangle component, let $X$ consist of the vertices of the triangle component. In this case, $A = 1, N = 3, M = 3$, and $\Lambda = -1$. Thus, Claim \ref{main-inequality} implies
    \begin{equation*}
        1 + 3(a - 1) + 3b < -b.
    \end{equation*}
    However, this contradicts \eqref{deg-2-equation}. If $G$ contains a 2-chain of triangles, let $X$ consist of the vertices of the 2-chain of triangles. In this case, $A = 2, N = 6, M = 7$, and $\Lambda = -1$. Thus, Claim \ref{main-inequality} implies
    \begin{equation*}
        2 + 6(a - 1) + 7b < -b.
    \end{equation*}
    However, this also contradicts \eqref{deg-2-equation}.
\end{proof}

If $H$ is an induced subgraph of $G$, we let $\Phi(H)$ denote the number of edges of $G$ with one end in $V(H)$ and one end in $V(G) \setminus V(H)$.

\begin{claim}\label{no-deg-2-in-triangle}
    $G$ contains no degree-2 vertices that are part of a triangle.
\end{claim}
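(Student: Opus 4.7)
Suppose for contradiction that $G$ contains a degree-2 vertex $v$ whose two neighbors $u_1, u_2$ are adjacent, so that $v u_1 u_2$ is a triangle. By Claim~\ref{min-degree-2} we have $d_G(u_1), d_G(u_2) \geq 2$, and by Claim~\ref{lambda-equals-zero} at least one of them has degree at least~$3$ (else $\{v, u_1, u_2\}$ would be a triangle component of $G$). Moreover, since $G$ contains no triangle adjacent to a 4-cycle, $u_1$ and $u_2$ share no common neighbor other than $v$: any such common neighbor $x$ would yield the triangle $u_1 u_2 x$ sharing the edge $u_1 u_2$ with the 4-cycle $v u_1 x u_2$.

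I would apply Claim~\ref{main-inequality} with $X = \{v, u_1, u_2\}$. Since all of $v$'s neighbors lie in $X$, every independent set of $G'$ extends by $v$, and since $X$ is a triangle no further vertex of $X$ can be added, so $A = 1$. Thus $N = 3$ and $M = d_G(u_1) + d_G(u_2) - 1$; write $p = d_G(u_1) + d_G(u_2) - 4 \geq 1$ for the number of external edges from $X$. Each new difficult component of $G'$ must have at least one edge to $X$ in $G$ (otherwise it would already be a difficult component of $G$, contradicting Claim~\ref{lambda-equals-zero}), and distinct components use distinct such edges, so $\Lambda \leq p$. If $\Lambda < p$, then $M - \Lambda \geq 4$ and Claim~\ref{main-inequality} yields $3a + 4b < 2$, contradicting \eqref{deg-2-equation}.

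It then remains to handle the case $\Lambda = p$, in which each of the $p$ new difficult components $C_i$ has exactly one edge to $X$, and the enlarged set $X' = \{v, u_1, u_2\} \cup \bigcup_i C_i$ has no edges to $V(G) \setminus X'$ in $G$, so $X'$ is a component of $G$. Structurally, $X'$ is a tree of vertex-disjoint triangles joined by single bridging edges, with the triangle $T = v u_1 u_2$ at its center and each $C_i$ (a triangle or 2-chain of triangles) hanging off $u_1$ or $u_2$. If $X'$ is itself a 2-chain of triangles, this contradicts Claim~\ref{lambda-equals-zero}. Otherwise $X'$ contains $k \geq 3$ triangles with $N' = 3k$, $M' = 4k - 1$, $\alpha(X') = k$, and $\Lambda' = 0$; applying Claim~\ref{main-inequality} yields $k(3a + 4b - 2) < b$, which directly contradicts \eqref{3-chain-equation} for $k = 3$ and follows from combining \eqref{3-chain-equation} with $(k - 3)$ instances of \eqref{deg-2-equation} for $k \geq 4$.

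The main obstacle is verifying rigorously that $X'$ has the claimed tree-of-vertex-disjoint-triangles structure: that no extra edges arise inside $X'$ beyond those of $T$, the $C_i$, and the $p$ bridging edges. The absence of edges between distinct $C_i$ follows from their being different components of $G'$, the single bridging edge per $C_i$ is forced by the equality $\Lambda = p$, and the no-common-neighbor observation rules out $u_1$ and $u_2$ sharing an attachment vertex in some $C_i$.
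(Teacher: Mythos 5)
Your proposal is correct, and its skeleton matches the paper's: delete the triangle, use \eqref{deg-2-equation} to show the reduction can only fail when every edge leaving the triangle goes to a distinct new difficult component, and then absorb those components into the deleted set and invoke \eqref{3-chain-equation}. Where you diverge is in how the tight case is resolved. The paper chooses the triangle $J$ containing a degree-2 vertex so that $\Phi(J)$ is minimum; combined with a count of how many difficult components the leaving edges can feed, this forces $\Phi(J) = \Lambda = 1$, so exactly one 2-chain of triangles is attached and a single application of \eqref{3-chain-equation} with $N = 9$, $M = 11$ finishes. You make no extremal choice: you allow all $p = \Phi(G[X])$ leaving edges to attach to distinct difficult components, observe that the enlarged set $X'$ is then an entire component of $G$ that is a tree of $k \geq 3$ pairwise disjoint triangles joined by $k-1$ bridges, and check $k(3a+4b-2) \geq b$ via \eqref{3-chain-equation} for $k=3$ plus $(k-3)$ copies of \eqref{deg-2-equation}. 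This buys you a uniform treatment of all configurations at the cost of the structural verification you flag at the end; that verification does go through (distinct $C_i$ are nonadjacent as components of $G \setminus X$, equality $\Lambda = p$ forces exactly one bridge per $C_i$ and no leftover leaving edges, and one vertex per triangle can be chosen independently since $v$ carries no bridge and each triangle of a $C_i$ has a vertex avoiding its at most two bridge endpoints as needed). Both arguments are valid; the paper's is shorter because the minimality trick collapses the case analysis to $k = 3$.
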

\begin{proof}
    Suppose towards a contradiction that $G$ contains a degree-2 vertex that is a part of a triangle, $J$, and choose $J$ such that $\Phi(J)$ is minimum. By Claim \ref{lambda-equals-zero}, $\Phi(J) \ge 1$. Let $X$ consist of the vertices of $J$, so $A = 1, N = 3$, and $M = 3 + \Phi(J)$. Claim \ref{main-inequality} implies
    \begin{equation*}
        1 + 3(a - 1) + (3 + \Phi(J))b < \Lambda b.
    \end{equation*}
    By \eqref{deg-2-equation}, $\Lambda \geq \Phi(J)$. By the choice of $J$ with $\Phi(J)$ minimum, either $\Phi(J) = 1$ or every other triangle $H$ satisfies $\Phi(H) \geq 2$, so $\Lambda \leq \max\{1, \Phi(J)/2\}$. Thus, $\Lambda = \Phi(J) = 1$. Claim \ref{lambda-equals-zero} implies $G \setminus X$ does not contain a triangle component, as otherwise $G$ contains a 2-chain of triangles. Therefore, $G \setminus X$ contains a 2-chain of triangles. Let $X'$ consist of $X$ and the vertices of this 2-chain of triangles and note $\Phi(G[X']) = 0$. Hence, $\lambda(G \setminus X') = 0$, so Claim \ref{main-inequality} with $X'$ playing the role of $X$ implies
    \begin{equation*}
        3 + 9(a - 1) + 11b < 0.
    \end{equation*}
    However, this contradicts \eqref{3-chain-equation}.
\end{proof}

\begin{claim}\label{triangles-edges-leaving}
    Every triangle $H$ in $G$ satisfies $\Phi(H) \geq 3$.
\end{claim}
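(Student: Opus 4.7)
The plan is to combine Claim~\ref{min-degree-2} and Claim~\ref{no-deg-2-in-triangle} to deduce that every vertex of any triangle in $G$ has degree at least $3$, and then count edges leaving the triangle by summing external degrees. No further reducible configuration argument should be needed, since the previous claims already do all the necessary work.

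In detail, I would fix an arbitrary triangle $H$ in $G$ with vertex set $\{u,v,w\}$. By Claim~\ref{min-degree-2}, each of $u,v,w$ has degree at least $2$; by Claim~\ref{no-deg-2-in-triangle}, none of them can have degree exactly $2$, since each lies on the triangle $H$. Hence $d_G(u),d_G(v),d_G(w)\ge 3$. Inside $H$, each vertex has exactly $2$ neighbors (the other two triangle vertices), so each vertex of $H$ has at least $d_G(\cdot)-2\ge 1$ neighbors in $V(G)\setminus V(H)$. Since every edge leaving $H$ has exactly one endpoint in $V(H)$, it is counted exactly once in this sum, giving
\begin{equation*}
  \Phi(H)\;=\;\sum_{x\in V(H)}\bigl(d_G(x)-2\bigr)\;\ge\;3.
\end{equation*}

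I do not expect any real obstacle here: the claim is an immediate corollary of the degree information already assembled, and unlike the previous claims it does not even require invoking the inductive machinery of Claim~\ref{main-inequality}. The only thing to be careful about is to note that edges leaving $H$ cannot have both endpoints in $V(H)$ (so the degree sum counts each leaving edge exactly once, not twice), which is automatic from the definition of $\Phi$. Consequently the proof should be essentially one short paragraph, serving as a bookkeeping step that will be used in later, more delicate claims.
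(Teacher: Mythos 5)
Your proof is correct and matches the paper's argument, which likewise combines Claim~\ref{min-degree-2} with Claim~\ref{no-deg-2-in-triangle} to conclude that each of the three triangle vertices has a neighbor outside $V(H)$. The paper states this in one sentence; your version just makes the degree bookkeeping explicit.
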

\begin{proof}
    By Claim \ref{no-deg-2-in-triangle}, all three vertices in $V(H)$ must have a neighbor in $V(G) \setminus V(H)$.
\end{proof}

\begin{claim}\label{2-chain-edges-leaving}
    If $H$ induces a 2-chain of triangles in $G$, then $\Phi(H) \geq 4$.
\end{claim}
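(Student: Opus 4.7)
The plan is to deduce this directly from Claim \ref{no-deg-2-in-triangle} without invoking Claim \ref{main-inequality} at all. First I would fix notation: let $H$ be an induced $2$-chain of triangles in $G$, consisting of two vertex-disjoint triangles $T_1$ and $T_2$ joined by a single bridge edge $uv$ with $u \in V(T_1)$ and $v \in V(T_2)$. Inside $H$, the bridge endpoints $u$ and $v$ have degree $3$, while each of the remaining four vertices has degree exactly $2$ in $H$ and lies on a triangle of $G$ (namely $T_1$ or $T_2$).

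Next I would invoke Claim \ref{no-deg-2-in-triangle}: since none of those four triangle vertices may have degree $2$ in $G$, each of them must send at least one edge out of $H$. Because the four interior endpoints of those external edges are distinct, the edges themselves are distinct, so they contribute at least $4$ to $\Phi(H)$, giving $\Phi(H) \geq 4$. The argument is a one-step consequence of the preceding claim, so no step really stands out as an obstacle; the only thing to notice is that the bridge endpoints $u, v$ need not be used in the count, and that Claim \ref{lambda-equals-zero} already rules out the degenerate case where $H$ is itself a whole component of $G$ (which would give $\Phi(H) = 0$).
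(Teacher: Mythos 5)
Your proof is correct and is essentially the paper's own argument: the four degree-$2$ vertices of $H$ each lie on a triangle, so by Claim \ref{no-deg-2-in-triangle} each must have a neighbor outside $V(H)$, yielding four distinct edges leaving $H$. (The appeal to Claim \ref{lambda-equals-zero} is unnecessary, since the same argument already forbids $H$ from being a component.)
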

\begin{proof}
    By Claim \ref{no-deg-2-in-triangle}, all four vertices with degree two in $H$ must have a neighbor in $V(G) \setminus V(H)$.
\end{proof}

We use Claims \ref{triangles-edges-leaving} and \ref{2-chain-edges-leaving} to show that $\lambda(G\setminus{X}) =0$ for various choices of $X$.

\begin{claim}\label{min-deg-3}
    The minimum degree of $G$ is at least three.
\end{claim}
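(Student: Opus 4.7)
The plan is to apply Claim \ref{main-inequality} with $X = \{v, u_1, u_2\}$, where $v$ is a hypothetical degree-2 vertex of $G$ with neighbors $u_1$ and $u_2$. By Claim \ref{no-deg-2-in-triangle} the pair $u_1u_2$ is a non-edge, so $X$ induces the path $u_1 v u_2$; this gives $N = 3$ and $M = d_G(u_1) + d_G(u_2)$, namely the two path edges together with the $d_G(u_1)-1$ and $d_G(u_2)-1$ edges leaving $X$ at $u_1$ and $u_2$.

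For the extension parameter, the key observation is that $N_G(v) \subseteq X$, so $v$ is isolated from $V(G) \setminus X$ and every independent set of $G' = G \setminus X$ can be augmented by $v$ alone. Thus Claim \ref{main-inequality} applies with $A = 1$.

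Next I would bound the number $\Lambda$ of new difficult components of $G'$. Any such component $H$ is a triangle or 2-chain of triangles in $G$ separated from $V(G) \setminus V(H)$ only through $X$, so Claims \ref{triangles-edges-leaving} and \ref{2-chain-edges-leaving} imply that $H$ absorbs at least three (respectively four) of the $d_G(u_1) + d_G(u_2) - 2$ edges leaving $X$. Consequently $\Lambda \leq (d_G(u_1) + d_G(u_2) - 2)/3$.

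Substituting $A = 1$, $N = 3$, $M = d_G(u_1) + d_G(u_2)$ into Claim \ref{main-inequality} and rearranging yields $3a - 2 + b\bigl(d_G(u_1) + d_G(u_2) - \Lambda\bigr) < 0$. A short case check using the $\Lambda$-bound together with $d_G(u_1) + d_G(u_2) \geq 4$ (from Claim \ref{min-degree-2}) and the integrality of $\Lambda$ confirms $d_G(u_1) + d_G(u_2) - \Lambda \geq 4$ in every case, producing $3a + 4b < 2$ and contradicting \eqref{deg-2-equation}. The main delicacy is calibrating $X$ so that the coefficient of $b$ in the final inequality matches the ``$4b$'' in \eqref{deg-2-equation}; a naive choice such as $X = \{v\}$ only permits $A = 0$ and yields a strictly weaker inequality, while the chosen $X$ trades the extra vertices for exactly the extension gain needed to line up with \eqref{deg-2-equation}.
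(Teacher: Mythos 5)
Your proposal is correct and follows essentially the same approach as the paper: the same set $X=\{v,u_1,u_2\}$ with $A=1$, $N=3$, $M=d_G(u_1)+d_G(u_2)$, leading to a contradiction with \eqref{deg-2-equation}. The only difference is in bounding $\Lambda$: the paper first rules out triangle components of $G\setminus X$ (since one of $u_1,u_2$ would be adjacent to two vertices of such a triangle, creating adjacent triangles) and uses $\Lambda\leq\lfloor(M-2)/4\rfloor$, whereas your coarser bound $\Lambda\leq(M-2)/3$ from Claims \ref{triangles-edges-leaving} and \ref{2-chain-edges-leaving} sidesteps that observation and, as your case check correctly shows, still yields $M-\Lambda\geq 4$.
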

\begin{proof}
    By Claim \ref{min-degree-2}, the minimum degree of $G$ is at least two. Suppose towards a contradiction that there exists a degree-2 vertex $v$ in $G$. Let $X$ consist of $v$ and its two neighbors, $u$ and $w$, so $A = 1, N = 3$, and $M = d_G(u) + d_G(w)$ by Claim \ref{no-deg-2-in-triangle}. By Claim \ref{min-degree-2}, the degrees of $u$ and $w$ are each at least two. Note $G \setminus X$ cannot contain a triangle component because one of $u$ or $w$ would have to be adjacent to two vertices of the triangle by Claim \ref{triangles-edges-leaving}, so $G$ would contain triangles sharing an edge. Thus, by Claim \ref{2-chain-edges-leaving}, $\Lambda \le \lfloor (d_G(u) + d_G(w) - 2)/4 \rfloor$. Claim \ref{main-inequality} implies
    \begin{equation*}
        1 + 3(a - 1) + (d_G(u) + d_G(w))b < b \Biggl\lfloor \frac{d_G(u) + d_G(w) - 2}{4} \Biggr\rfloor.
    \end{equation*}
    However, this contradicts \eqref{deg-2-equation}. Indeed, if $d_G(u) + d_G(v) \ge 5$, the floor function can be ignored and we can still achieve a contradiction, and for $d_G(u) + d_G(v) = 4$, we still obtain a contradiction.
\end{proof}

\begin{claim}\label{max-deg-4}
    The maximum degree of $G$ is at most four.
\end{claim}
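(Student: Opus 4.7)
The plan is to suppose for contradiction that some vertex $v \in V(G)$ has $d := d_G(v) \geq 5$ and to apply Claim \ref{main-inequality} with $X = \{v\}$. For this choice, $A = 0$ (trivially, any independent set $I'$ of $G \setminus \{v\}$ is already independent in $G$), $N = 1$, and $M = d$. The main task is to prove that $\Lambda = \lambda(G \setminus \{v\}) = 0$; once this is done, Claim \ref{main-inequality} reduces to $(a-1) + bd < 0$, which contradicts \eqref{deg-5-equation} since $a + bd \geq a + 5b \geq 1$.

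To show $\lambda(G \setminus \{v\}) = 0$, let $C$ be any difficult component of $G \setminus \{v\}$. By Claim \ref{lambda-equals-zero}, $C$ is not already a component of $G$, so $v$ must have at least one neighbor in $V(C)$. By Claim \ref{min-deg-3}, every vertex $u \in V(C)$ satisfies $d_G(u) \geq 3$; since $d_G(u) = d_C(u) + 1$ if $uv \in E(G)$ and $d_G(u) = d_C(u)$ otherwise, every $u \in V(C)$ with $d_C(u) = 2$ must be adjacent to $v$ in $G$.

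If $C$ is a triangle with vertices $a_1, a_2, a_3$, then all three vertices have degree $2$ in $C$, so $\{v, a_1, a_2, a_3\}$ induces a $K_4$ in $G$; the triangle $v a_1 a_2$ then shares the edge $v a_1$ with the $4$-cycle $v a_1 a_3 a_2 v$, contradicting the hypothesis that $G$ has no triangle adjacent to a $4$-cycle. If $C$ is a $2$-chain of triangles, with triangles $a_1 a_2 a_3$ and $b_1 b_2 b_3$ joined by the edge $a_3 b_1$, then the four degree-$2$ vertices $a_1, a_2, b_2, b_3$ of $C$ are all adjacent to $v$, so the triangle $a_1 a_2 a_3$ shares the edge $a_1 a_3$ with the $4$-cycle $v a_1 a_3 a_2 v$, again contradicting the hypothesis on $G$.

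I expect the only real obstacle to be the structural case analysis above; in each case the forbidden ``triangle adjacent to a $4$-cycle'' configuration is forced immediately by combining the min-degree-$3$ conclusion of Claim \ref{min-deg-3} with the local structure of a difficult component, so no further constraints from \eqref{deg-1-equation}--\eqref{3-reg-equation} beyond \eqref{deg-5-equation} are needed.
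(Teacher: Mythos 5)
Your proposal is correct and follows essentially the same route as the paper: take $X=\{v\}$, show $\Lambda=0$ by arguing that $v$ would have to be adjacent to too many vertices of any difficult component of $G\setminus\{v\}$ (forcing a triangle adjacent to a 4-cycle), and then contradict \eqref{deg-5-equation}. The only cosmetic difference is that you derive the adjacency of $v$ to the degree-2 vertices of the component directly from Claim \ref{min-deg-3}, whereas the paper packages the same fact via Claims \ref{triangles-edges-leaving} and \ref{2-chain-edges-leaving}.
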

\begin{proof}
    Suppose towards a contradiction that there exists a vertex $v$ with degree greater than four. Let $X = \{v\}$, so $A = 0, N = 1$, and $M = d_G(v)$. Note that $G \setminus X$ cannot contain a triangle component because $v$ would have to be adjacent to three vertices of the triangle by Claim \ref{triangles-edges-leaving}, so $G$ would contain triangles sharing an edge. Likewise by Claim \ref{2-chain-edges-leaving}, $G \setminus X$ cannot contain a 2-chain of triangles or else $G$ would contain triangles sharing an edge. Thus, $\Lambda = 0$, and Claim \ref{main-inequality} implies
    \begin{equation*}
        a - 1 + d_G(v)b < 0.
    \end{equation*}
    However, this contradicts \eqref{deg-5-equation}.
\end{proof}

\begin{claim}\label{deg-4-no-deg-3-nbrs}
    Every degree-3 vertex in $G$ has no degree-4 neighbors.
\end{claim}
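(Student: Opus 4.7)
The plan is to suppose for contradiction that a degree-3 vertex $u$ of $G$ has a degree-4 neighbor $v$, with $N(u) = \{v, x, y\}$, and to apply Claim~\ref{main-inequality} with $X = N[u] = \{u, v, x, y\}$. Since every neighbor of $u$ lies in $X$, for any independent set $I'$ of $G' = G \setminus X$ the set $I' \cup \{u\}$ is independent in $G$, so $A \geq 1$ with $N = 4$. By Claims~\ref{min-deg-3} and~\ref{max-deg-4}, each of $x,y$ has degree $3$ or $4$. I would first verify, using the no-triangle-adjacent-to-4-cycle hypothesis, that $|E(G[X])|$ is small: for instance, if both $xy$ and $vx$ were edges of $G$ then the triangles $uxy$ and $uvx$ would share edge $ux$, producing the forbidden configuration of a triangle adjacent to the 4-cycle $uvxy$. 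This restricts the internal structure of $G[X]$ enough to guarantee $M \geq 9$ in every case.

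For the value of $\Lambda$, Claims~\ref{triangles-edges-leaving} and~\ref{2-chain-edges-leaving} imply that every new triangle (respectively, 2-chain) component of $G'$ receives at least $3$ (respectively, $4$) edges from $X$. Summing the degrees of vertices in $X$ bounds the total number of edges from $X$ to $V(G)\setminus X$, forcing $\Lambda \leq 2$. The argument then splits on the value of $\Lambda$.

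For $\Lambda = 0$, Claim~\ref{main-inequality} directly yields $4a + Mb < 3$, and since $M \geq 9$ this contradicts \eqref{deg-4-deg-3-nbr-equation}. For $\Lambda = 1$, I would first attempt a direct contradiction using the original $X$ via \eqref{deg-4-deg-3-nbr-equation} (which succeeds whenever $M$ is large enough); otherwise I would enlarge $X$ by absorbing the unique difficult component $D$ of $G'$, setting $X' = X \cup V(D)$. This increases $A$ by $\alpha(D) \in \{1,2\}$, $N$ by $|V(D)| \in \{3,6\}$, and $M$ by $|E(D)| \in \{3,7\}$, while reducing $\Lambda$ to $0$, and the resulting inequality contradicts \eqref{deg-4-deg-3-nbr-difficult-equation}. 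For $\Lambda = 2$, the analogous absorption of both difficult components yields a contradiction with \eqref{deg-4-deg-3-nbr-2-difficult-equation}.

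The main obstacle will be the careful case analysis on the internal structure of $G[X]$ --- the possible adjacencies among $v,x,y$ and the degrees of $x$ and $y$ --- together with verifying that the edge count leaving $X$ and the combinatorics of the possible difficult components match the correct inequality in each case. In particular, showing that when $\Lambda = 2$ the few edges available from $X$ to $V(G)\setminus X$ force both components to be triangles (ruling out mixed pairings with a 2-chain) will be the most delicate bookkeeping, but it is precisely this restriction that makes \eqref{deg-4-deg-3-nbr-2-difficult-equation} sufficient rather than needing a stronger hypothesis.
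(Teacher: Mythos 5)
Your overall strategy matches the paper's: delete $N[v]$, bound $M$ and $\Lambda$, and absorb the difficult components of $G'$ into $X$. But there is a genuine gap in the absorption step. You increase $A$ only by $\alpha(D)$ for each absorbed component $D$, and this is not enough in the case $M=10$, $\Lambda=2$ where $G\setminus X$ contains one triangle and one 2-chain of triangles. There you get $A = 1+1+2 = 4$, $N=13$, $M=20$, and Claim~\ref{main-inequality} yields $13a+20b<9$, which does \emph{not} contradict \eqref{deg-4-deg-3-nbr-2-difficult-equation} (that inequality only guarantees $13a+20b\geq 8$). Moreover, your hope that this mixed pairing can be ruled out by edge counting is false: when $M=10$ we have $\Phi(G[X])=7$, and a triangle plus a 2-chain need only $3+4=7$ edges into $X$, so the configuration survives. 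The paper resolves this by a structural analysis of exactly how the components attach to $N(v)$ (each vertex of $N(v)$ meets each triangle in at most one vertex, adjacent neighbors of $v$ cannot meet the same triangle, etc.), which produces an independent set of size $5$ inside $G[X']$ --- two nonadjacent neighbors of $v$ together with three component vertices --- giving $13a+20b<8$ and the needed contradiction.

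A second, related gap is the case $M=9$, $\Lambda=1$. If the unique difficult component were a triangle $T$, your absorption gives $A=2$, $N=7$, $M=12$, hence $7a+12b<5$; this contradicts none of \eqref{deg-1-equation}--\eqref{3-reg-equation} (indeed $7a+12b = 169/34 < 5$ for the paper's own choice $a=19/34$, $b=3/34$). So you must show the component is a 2-chain. This requires the observation that $M=9$ forces an edge inside $G[N(v)]$, and that a triangle component of $G\setminus X$ would have each of its vertices adjacent to a distinct neighbor of $v$, so two adjacent neighbors of $v$ together with an edge of that triangle would form a 4-cycle adjacent to a triangle --- the forbidden configuration. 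Your proposal does not supply either of these structural arguments, and without them the inequalities \eqref{deg-4-deg-3-nbr-difficult-equation} and \eqref{deg-4-deg-3-nbr-2-difficult-equation} are not strong enough to close the proof.
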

\begin{proof}
    Suppose towards a contradiction that $G$ contains a degree-3 vertex $v$ with a degree-4 neighbor $u$. Let $X$ consist of $v$ and its neighbors, so $A = 1$ and $N = 4$. Since $G$ does not contain a triangle adjacent to a 4-cycle, $G[N(v)]$ has at most one edge. Thus, by Claims \ref{min-deg-3} and \ref{max-deg-4}, we have $9 \leq M \leq 12$. Claim \ref{main-inequality} implies
    \begin{equation*}
        1 + 4(a - 1) + Mb < \Lambda b.
    \end{equation*}
    By \eqref{deg-4-deg-3-nbr-equation}, $M - \Lambda \leq 8$. However, Claims \ref{triangles-edges-leaving} and \ref{2-chain-edges-leaving} imply $\Lambda \leq \lfloor \Phi(G[X])/3 \rfloor \leq \lfloor (M - 3)/3 \rfloor$, so $M \in \{9, 10\}$. Moreover, if $M = 9$, then $\Phi(G[X]) = 5$ and $\Lambda = 1$, and if $M = 10$, then $\Lambda = 2$. Note that in either case, if neighbors of $v$ are adjacent, then $G \setminus X$ cannot contain a triangle component. Suppose it did, so by Claim \ref{triangles-edges-leaving} and because $G$ has no adjacent triangles, the vertices of a triangle component must be adjacent to every neighbor of $v$. However, because two neighbors of $v$ are adjacent, they would belong to a 4-cycle that shares an edge with a triangle in $G$.

    \begin{figure}[H]
        \centering
        \subfloat[\centering $G \setminus X$ contains two triangle components]{
            % \begin{tikzpicture}[square/.style={regular polygon,regular polygon sides=4}]
            %   \tikzstyle{every node}=[fill, draw, circle, scale=.5, font=\huge];
            %   \node[label=90:$v$] (v) at (0,2) {};
            %   \node[label=135:$u$] (u) at (-2,1) {};
            %   \node (u2) at (0,1) [square] {};
            %   \node (u3) at (2,1) [square] {};
            %   \draw (v) -- (u);
            %   \draw (v) -- (u2);
            %   \draw (v) -- (u3);
            %   \draw (u) -- ($(u) + (-180:1)$);

            %   \node (a1) at (-1,0) {};
            %   \node (a2) at ($(a1) + (-60:1)$) {};
            %   \node (a3) at ($(a1) + (-120:1)$) [square] {};
            %   \draw[dashed] (a1) -- (a2) -- (a3) -- (a1);
            %   \draw (a3) -- (u);
            %   \draw (a1) -- (u2);

            %   \node (b1) at (1, 0) {};
            %   \node (b2) at ($(b1) + (-60:1)$) {};
            %   \node (b3) at ($(b1) + (-120:1)$) [square] {};
            %   \draw[dashed] (b1) -- (b2) -- (b3) -- (b1);
            %   \draw (b2) -- (u3);
            %   \draw (b1) -- (u2);

            %   %\draw (a2) to[out=45, in=-135] ($(u2)!.5!(b1)$) to[out=45, in=180] (u3);
            %   \draw (a2) to[out=-45, in=180] ($(b2) + (-90:.5)$) to[in=-45, out=0] (u3);
            %   %\draw (b3) to[out=135, in=-45] ($(u2)!.5!(a1)$) to[out=135, in=0] (u);
            %   \draw (b3) to[out=-135, in=0] ($(a3) + (-90:.5)$) to[in=-135, out=180] (u);
              
            % \end{tikzpicture}\label{subfig:M=10-tri-tri}
            \includegraphics[]{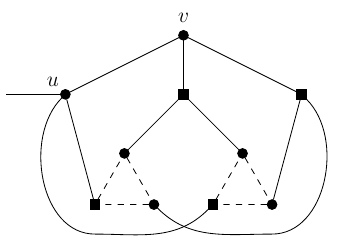}\label{subfig:M=10-tri-tri}
        }
        \qquad
        \subfloat[\centering $G \setminus X$ contains a triangle component and a 2-chain of triangles]{
            % \begin{tikzpicture}[square/.style={regular polygon,regular polygon sides=4, scale=1}]
            %   \tikzstyle{every node}=[fill, draw, circle, scale=.5, font=\huge];
            %   \node[label=90:$v$] (v) at (0,2) {};
            %   \node[label=135:$u$] (u) at (-2.5,1) {};
            %   \node (u2) at (0,1) [square] {};
            %   \node (u3) at (2.5,1) [square] {};
            %   \draw (v) -- (u);
            %   \draw (v) -- (u2);
            %   \draw (v) -- (u3);

            %   \node (a1) at (-2,0) {};
            %   \node (a2) at ($(a1) + (-60:1)$) [square] {};
            %   \node (a3) at ($(a1) + (-120:1)$) {};
            %   \draw[dashed] (a1) -- (a2) -- (a3) -- (a1);

            %   \node (b1) at (0,0) [square] {};
            %   \node (b2) at ($(b1) + (-60:1)$) {};
            %   \node (b3) at ($(b1) + (-120:1)$) {};
            %   \draw[dashed] (b1) -- (b2) -- (b3) -- (b1);
            %   \draw[dashed] (b3) -- (a2);
            %   \draw (u) -- (a3);
            %   \draw (u) -- (b1);
            %   \draw (u2) -- (a1);
            %   \draw (u3) -- (b2);

            %   \node (c1) at (2,0) {};
            %   \node (c2) at ($(c1) + (-60:1)$) {};
            %   \node (c3) at ($(c1) + (-120:1)$) [square] {};
            %   \draw[dashed] (c1) -- (c2) -- (c3) -- (c1);
            %   \draw (c2) -- (u3);
            %   \draw (c1) -- (u2);
            %   \draw (c3) to[in=-0, out=-135] ($(b2) + (-90:.5)$) -- ($(a3) + (-90:.5)$) to[in=-135, out=180] (u);
              
            % \end{tikzpicture}
            \includegraphics[]{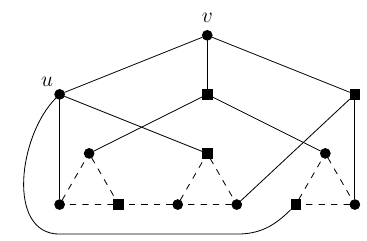}
            \label{subfig:M=10-tri-2chain}
        }
        
        \caption{The two configurations in Case 1. Edges of difficult components are depicted with dashed lines, and any independent set in $G \setminus X'$ can be extended by adding the square vertices.}
        \label{fig:deg-3-deg-4-nbr-M=10}
    \end{figure}

    \textbf{Case 1.} $M = 10$ and $\Lambda = 2$. 
    
    In this case, $G[N(v)]$ has no edges, and $u$ is the only degree-4 neighbor of $v$. Otherwise, if $G[N(v)]$ contains an edge, then $\Phi(G[X]) = 6$, so by Claim \ref{2-chain-edges-leaving} and since $\Lambda = 2$, $G \setminus X$ must contain a triangle component. Let $X'$ consist of $X$ and the vertex set of the difficult components in $G \setminus X$. By Claim \ref{2-chain-edges-leaving} and since $\Phi(G[X]) \leq 7$, $G \setminus X$ cannot contain two 2-chains of triangles. If $G \setminus X$ contains two triangle components, then since $G$ contains no adjacent triangles, the vertices of each triangle component must be adjacent to every neighbor of $v$. Thus, $G[X']$ is as in Figure \ref{subfig:M=10-tri-tri}, so Claim \ref{main-inequality} with $X'$ playing the role of $X$ implies
    \begin{equation*}
        4 + 10(a - 1) + 16b < 0.
    \end{equation*}
    However, this contradicts \eqref{deg-4-deg-3-nbr-difficult-equation}. If $G \setminus X$ contains a triangle component and a 2-chain of triangles, note that as before, the vertices of the triangle component must be adjacent to every neighbor of $v$. Also note that $u$ cannot be adjacent to two vertices in the same triangle, or else $G$ contains adjacent triangles. Thus, $G[X']$ is as in Figure \ref{subfig:M=10-tri-2chain}, which has an independent set of size five, so Claim \ref{main-inequality} with $X'$ playing the role of $X$ implies
    \begin{equation*}
        5 + 13(a - 1) + 20b < 0.
    \end{equation*}
    However, this contradicts \eqref{deg-4-deg-3-nbr-2-difficult-equation}.\\

    \begin{figure}[H]
        \centering
        \subfloat[\centering $u$ has no neighbors in $N(v)$]{
            % \begin{tikzpicture}[square/.style={regular polygon,regular polygon sides=4}]
            %   \tikzstyle{every node}=[fill, draw, circle, scale=.5, font=\huge];
            %   \node[label=90:$v$] (v) at (0,2) [square] {};
            %   \node[label=135:$u$] (u) at (-2,1) {};
            %   \node (u2) at (0,1) {};
            %   \node (u3) at (2,1) {};
            %   \draw (v) -- (u);
            %   \draw (v) -- (u2);
            %   \draw (v) -- (u3) -- (u2);
            %   \draw (u) -- ($(u) + (-135:1)$);

            %   \node (a1) at (-1,0) [square] {};
            %   \node (a2) at ($(a1) + (-60:1)$) {};
            %   \node (a3) at ($(a1) + (-120:1)$) {};
            %   \draw[dashed] (a1) -- (a2) -- (a3) -- (a1);
            %   \draw (a1) -- (u2);
            %   \draw (a3) -- (u);

            %   \node (b1) at (1,0) [square] {};
            %   \node (b2) at ($(b1) + (-60:1)$) {};
            %   \node (b3) at ($(b1) + (-120:1)$) {};
            %   \draw[dashed] (b1) -- (b2) -- (b3) -- (b1);
            %   \draw[dashed] (b3) -- (a2);
            %   \draw (b2) -- (u3);
            %   \draw (b1) -- (u);
              
            % \end{tikzpicture}
            \includegraphics[]{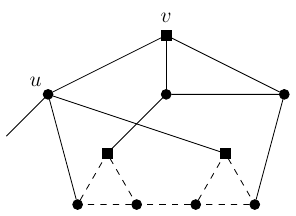}
            \label{subfig:M=9-u-not-in-tri}
        }
        \subfloat[\centering $u$ has one neighbor in $C$]{
            % \begin{tikzpicture}[square/.style={regular polygon,regular polygon sides=4}]
            %   \tikzstyle{every node}=[fill, draw, circle, scale=.5, font=\huge];
            %   \node[label=90:$v$] (v) at (0,2) [square] {};
            %   \node[label=135:$u$] (u) at (-2,1) {};
            %   \node (u2) at (0,1) {};
            %   \node (u3) at (2,1) {};
            %   \draw (v) -- (u);
            %   \draw (v) -- (u2) -- (u);
            %   \draw (v) -- (u3);
            %   \draw (u) -- ($(u) + (-135:1)$);

            %   \node (a1) at (-1,0) [square] {};
            %   \node (a2) at ($(a1) + (-60:1)$) {};
            %   \node (a3) at ($(a1) + (-120:1)$) {};
            %   \draw[dashed] (a1) -- (a2) -- (a3) -- (a1);
            %   \draw (a1) -- (u3);
            %   \draw (a3) -- (u);

            %   \node (b1) at (1,0) [square] {};
            %   \node (b2) at ($(b1) + (-60:1)$) {};
            %   \node (b3) at ($(b1) + (-120:1)$) {};
            %   \draw[dashed] (b1) -- (b2) -- (b3) -- (b1);
            %   \draw[dashed] (b3) -- (a2);
            %   \draw (b2) -- (u3);
            %   \draw (b1) -- (u2);
              
            % \end{tikzpicture}
            \includegraphics[]{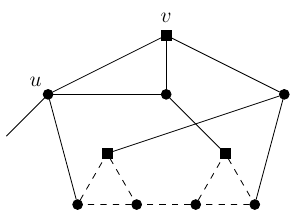}
            \label{subfig:M=9du=3}
            
        }
        \subfloat[\centering $u$ has two neighbors in $C$]{
            % \begin{tikzpicture}[square/.style={regular polygon,regular polygon sides=4}]
            %   \tikzstyle{every node}=[fill, draw, circle, scale=.5, font=\huge];
            %   \node[label=90:$v$] (v) at (0,2) [square] {};
            %   \node[label=135:$u$] (u) at (-2,1) {};
            %   \node (u2) at (0,1) {};
            %   \node (u3) at (2,1) {};
            %   \draw (v) -- (u);
            %   \draw (v) -- (u2) -- (u);
            %   \draw (v) -- (u3);
            %   \draw (u2) -- ($(u2) + (30:2)$);
              
            %   \node (a1) at (-1,0) [square] {};
            %   \node (a2) at ($(a1) + (-60:1)$) {};
            %   \node (a3) at ($(a1) + (-120:1)$) {};
            %   \draw[dashed] (a1) -- (a2) -- (a3) -- (a1);
            %   \draw (a1) -- (u3);
            %   \draw (a3) -- (u);

            %   \node (b1) at (1,0) [square] {};
            %   \node (b2) at ($(b1) + (-60:1)$) {};
            %   \node (b3) at ($(b1) + (-120:1)$) {};
            %   \draw[dashed] (b1) -- (b2) -- (b3) -- (b1);
            %   \draw[dashed] (b3) -- (a2);
            %   \draw (b2) -- (u3);
            %   \draw (b1) -- (u);
              
            % \end{tikzpicture}
            \includegraphics[]{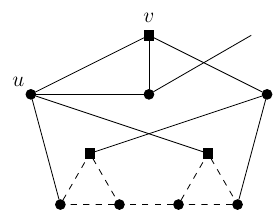}
            \label{subfig:M=9du=4}
        }
        
        \caption{The configurations in Case 2. Edges of difficult components are depicted with dashed lines, and any independent set in $G \setminus X'$ can be extended by adding the square vertices.}
        \label{fig:deg-3-deg-4-nbr-M=9}
    \end{figure}
    
    \textbf{Case 2.} $M = 9$ and $\Lambda = 1$. 
    
    In this case, $G[N(v)]$ has one edge and $u$ is the only degree-4 neighbor of $v$. As above, $G \setminus X$ has no triangle components, so $G \setminus X$ contains a 2-chain of triangles, $C$. Let $X'$ consist of $X \cup V(C)$. Note that adjacent neighbors of $v$ cannot both be adjacent to the same triangle, or else they belong to a 4-cycle adjacent to a triangle. Since $G$ does not contain adjacent triangles, a vertex in $N(v)$ cannot be adjacent to two vertices in the same triangle of $C$. Therefore if $u$ is not adjacent to a neighbor of $v$, then $G[X']$ must be as in Figure \ref{subfig:M=9-u-not-in-tri}. If $u$ is part of some triangle, then note that the vertex in $N(v) \setminus (N(u) \cup \{u\})$ must be adjacent to one vertex from each triangle in $C$. Moreover, $u$ is adjacent to either one or two vertices of $C$. In the first case $G[X']$ is as in Figure \ref{subfig:M=9du=3}, and in the second case, $G[X']$ is as in Figure \ref{subfig:M=9du=4}. Regardless of the specific configuration, Claim \ref{main-inequality} with $X'$ playing the role of $X$ implies
    \begin{equation*}
        3 + 10(a-1) + 16b < 0.
    \end{equation*}
    However, this contradicts \eqref{deg-4-deg-3-nbr-difficult-equation}.
\end{proof}

\begin{claim}\label{max-deg-3}
    The maximum degree of $G$ is at most three.
\end{claim}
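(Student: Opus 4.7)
Suppose for contradiction that some $v \in V(G)$ has degree $4$. By Claim \ref{deg-4-no-deg-3-nbrs} together with Claims \ref{min-deg-3} and \ref{max-deg-4}, every neighbor of $v$ has degree $4$. The plan is to apply Claim \ref{main-inequality} with $X = \{v\} \cup N(v)$ and contradict \eqref{deg-4-deg-4-nbr-equation}. Writing $E = |E(G[N(v)])|$, a degree count gives $A = 1$, $N = 5$, $|E(G[X])| = 4 + E$, $\Phi(G[X]) = 12 - 2E$, and $M = 16 - E$; moreover, two edges of $G[N(v)]$ sharing a vertex would yield a copy of $K_4 - e$ and hence a triangle adjacent to a $4$-cycle, so $E \leq 2$. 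Substituting into Claim \ref{main-inequality} and rearranging, the inequality to contradict becomes $5a + b(16 - E - \Lambda) < 4$, so in view of \eqref{deg-4-deg-4-nbr-equation} it suffices to show $\Lambda = 0$.

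To show $\Lambda = 0$, let $C$ be any hypothetical difficult component of $G \setminus X$ and record three observations. (i) Every edge from $C$ to $X$ lands in $N(v)$, since $v$'s only neighbors lie in $N(v)$. (ii) Because every vertex of $N(v)$ has degree $4$, Claim \ref{deg-4-no-deg-3-nbrs} forces every vertex of $C$ adjacent to $N(v)$ to itself have degree $4$ in $G$; in particular, a vertex of $C$ of inner degree $2$ must have exactly $2$ neighbors in $N(v)$, and a vertex of $C$ of inner degree $3$ must have exactly $1$ neighbor in $N(v)$. (iii) Since $G$ has no adjacent triangles, no $u \in N(v)$ is adjacent to two vertices of any single triangle of $C$; moreover, if some $t \in V(C)$ has two distinct neighbors $u_i, u_j \in N(v)$ with $u_i$ incident to some edge $u_i u_{i'}$ of $G[N(v)]$, then the $4$-cycle $t u_i v u_j t$ shares edge $v u_i$ with the triangle $v u_i u_{i'}$, contradicting the hypothesis on $G$. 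A case analysis on $E$ then finishes the argument. When $E = 2$, every vertex of $N(v)$ is edge-incident, so by (iii) no vertex of $C$ may have two neighbors in $N(v)$; combined with (ii) this rules out every vertex of $C$ of inner degree $2$, so $C$ cannot exist. When $E = 1$, only $u_3, u_4$ are non-edge-endpoints, so every vertex of $C$ of inner degree $2$ must be adjacent to both $u_3$ and $u_4$; taking two such vertices inside a common triangle of $C$ then yields, via $u_3$, a triangle sharing an edge with that triangle, a contradiction. When $E = 0$, a double count shows that the demand of $6$ (for a triangle component) or $10$ (for a $2$-chain component) required edges from $V(C)$ to $N(v)$ exceeds the supply of $4$ or $8$ allowed by the at-most-one-vertex-per-triangle rule of (iii).

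The main obstacle is the coupling between (ii) and (iii): Claim \ref{deg-4-no-deg-3-nbrs} forces each vertex of $C$ of inner degree $2$ to contribute \emph{exactly two} edges to $N(v)$, while (iii) severely constrains which pairs of $N(v)$-vertices may receive them. Once this bookkeeping is carried out in each of $E \in \{0, 1, 2\}$, no difficult component can attach to $N(v)$, so $\Lambda = 0$ and \eqref{deg-4-deg-4-nbr-equation} delivers the required contradiction via Claim \ref{main-inequality}.
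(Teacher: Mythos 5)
Your proof is correct and follows essentially the same route as the paper: delete $X = \{v\} \cup N(v)$, aim to contradict \eqref{deg-4-deg-4-nbr-equation}, and use the key observation that Claim \ref{deg-4-no-deg-3-nbrs} forces each inner-degree-2 vertex of a hypothetical difficult component to send two edges into $N(v)$, creating 4-cycles through $v$. The only difference is the endgame: you rule out difficult components outright (showing $\Lambda = 0$) via a case analysis on $|E(G[N(v)])|$, whereas the paper first argues that $v$ lies in no triangle and then plays $\Lambda \geq 3$ against $\Lambda \leq \lfloor \Phi(G[X])/6 \rfloor = 2$; both counts close the argument, and yours is if anything slightly more explicit about why the triangle-adjacency obstruction actually arises.
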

\begin{proof}
    Suppose towards a contradiction that $G$ contains a degree-4 vertex $v$. By Claim \ref{deg-4-no-deg-3-nbrs}, every neighbor of $v$ has degree four. Let $X$ consist of $v$ and its neighbors, so $A = 1$ and $N = 5$. Because $v$ can be a part of at most two triangles, $M \geq 14$. Claim \ref{main-inequality} implies
    \begin{equation*}
        1 + 5(a - 1) + Mb < \Lambda b.
    \end{equation*}
    By \eqref{deg-4-deg-4-nbr-equation}, $M - \Lambda \leq 13$, so $\Lambda > 0$.
    Therefore $G \setminus X$ contains a difficult component $C$. Let $u \in V(C)$ such that $d_C(u) = 2$. By Claim \ref{min-deg-3}, $|N_G(u) \cap X| \geq 1$. By Claim \ref{deg-4-no-deg-3-nbrs}, and since every vertex in $X$ has degree four, $d_G(u) \geq 4$. Thus, $|N_G(u) \cap X| \geq 2$ and $\Phi(C) \geq 6$ since $C$ has at least three vertices $w$ with $d_C(w) = 2$. This implies that $v$ and $u$ share two neighbors, so $v$ cannot be part of a triangle or else $G$ contains a 4-cycle adjacent to a triangle. Therefore $M = 16$, so $\Lambda \geq 3$. Since $\Phi(C) \geq 6$ for every such difficult component $C$, $\Lambda \leq \lfloor \Phi(G[X])/6 \rfloor = 2$, a contradiction. 
    % Thus Claim \ref{main-inequality} implies
    % \begin{equation*}
    %     1 + 5(a - 1) + 16b < 2b.
    % \end{equation*}
    % However, this contradicts \eqref{deg-4-deg-4-nbr-equation}. Therefore $\Lambda = 0$ and Claim \ref{main-inequality} implies
    % \begin{equation*}
    %     1 + 5(a-1) + 14 b < 0.
    % \end{equation*}
    % However, this contradicts \eqref{deg-4-deg-4-nbr-equation}.
\end{proof}

By Claim \ref{min-deg-3} and Claim \ref{max-deg-3}, $G$ is 3-regular. Also note since $G$ contains no triangles sharing an edge, $G$ has no subgraph isomorphic to $K_4$. Thus, by Brooks' Theorem \cite{brooks1941colouring}, $G$ is 3-colorable. By taking $X = V(G)$ and extending the independent set by the largest color class of any 3-coloring, Claim \ref{main-inequality} implies
\begin{equation*}
    \frac{1}{3}|V(G)| + (a - 1)|V(G)| + \frac{2}{3}|V(G)|b < 0.
\end{equation*}
However, this contradicts \eqref{3-reg-equation}, completing the proof of the theorem.\qed

\section{Proof of Theorem \ref{triangle-bordering-4-cycle-IR-thm}}\label{density-4-cycle-free-section}
In order to prove Theorem \ref{triangle-bordering-4-cycle-IR-thm}, we first need the following lemma.

\begin{lemma}\label{edge-bound}
    If $G$ is an $n$-vertex planar graph with no triangle adjacent to a 4-cycle and $n \geq 4$, then $|E(G)| \leq \frac{15}{7}(n - 2)$.
\end{lemma}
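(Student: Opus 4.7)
The plan is to combine Euler's formula with a short discharging argument on the faces of a planar embedding of $G$. By adding a bridge between two components if necessary (which preserves planarity and the hypothesis), I may assume $G$ is connected. Fix a planar embedding with face set $\mathcal{F}$ and write $\ell(F)$ for the length of the boundary walk of a face $F$; then $\sum_{F \in \mathcal{F}} \ell(F) = 2|E(G)|$ and, by Euler's formula, $|\mathcal{F}| = 2 + |E(G)| - n$. A short rearrangement of these identities shows that the target inequality $|E(G)| \leq \frac{15}{7}(n-2)$ is equivalent to the face-charge inequality
\[
\sum_{F \in \mathcal{F}} \bigl(4\ell(F) - 15\bigr) \geq 0,
\]
so it suffices to establish this.

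The structural input is the following: every edge of a triangular face is shared with a second face of length at least $5$. Indeed, the second face cannot itself be triangular, because for a connected graph with $n \geq 4$ no 3-cycle bounds both of its sides as faces, so two triangular faces sharing an edge would have to come from two distinct 3-cycles sharing that edge, giving a 4-cycle adjacent to each of those triangles, contrary to the hypothesis. Nor can the second face be a 4-face: a 4-face in a simple graph is bounded either by a genuine 4-cycle (which would share an edge with the triangle, again contrary to the hypothesis) or by a closed walk whose every edge is a bridge, and triangle edges are never bridges since they lie on a cycle.

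Armed with this observation, I would assign each face $F$ the initial charge $\mu(F) = 4\ell(F) - 15$ and apply the single rule that every face of length at least $5$ sends $1$ unit of charge to each adjacent triangular face across each shared edge. A triangular face then receives $+1$ across each of its three edges and ends with charge $-3 + 3 = 0$; a 4-face sends nothing and retains charge $+1$; and a face of length $\ell \geq 5$ sends out at most one unit per boundary edge, finishing with charge at least $4\ell - 15 - \ell = 3\ell - 15 \geq 0$. Summing the final charges over all faces yields the face-charge inequality, and hence the lemma.

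The main subtlety I anticipate is in the structural step, particularly in ruling out a triangular face sharing an edge with a 4-face whose boundary is not a simple 4-cycle; this requires simplicity of $G$ together with the observation that triangle edges are not bridges. The remainder of the argument is a routine charge balance, and I expect the resulting bound to be tight precisely when every edge of $G$ is shared between a triangular face and a pentagonal face.
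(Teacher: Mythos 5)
Your proof is correct and is essentially the paper's argument in discharging clothing: the key structural fact (no edge of a triangular face lies on another face of length at most four, which needs both the exclusion of adjacent triangles and the bridge/degenerate-boundary caveat you handle) is the same, and your face-charge inequality $\sum_F(4\ell(F)-15)\ge 0$ is exactly the paper's bound $|F(G)|\le \tfrac{8}{15}|E(G)|$, obtained there by directly counting at least $3|X|+2|Y|$ distinct edges on short faces. The only genuine (and slightly cleaner) departure is your reduction to the connected case by adding bridges rather than the paper's induction on components.
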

\begin{proof}
    Suppose $G$ is connected. Take an arbitrary planar embedding of $G$. Let $F(G)$ be the set of faces in the embedding. Let $\ell(f)$ be the length of each face $f \in F(G)$. Let $X$ be the set of faces that have length three. Let $Y$ be the set of faces that have length four. Note that $X$, $Y$ and $F(G)\setminus(X\cup{Y})$ partition $F(G)$, and all faces of length less than five are in $X \cup Y$, since $n \geq 4$. Thus, 
    \begin{equation}\label{"intermediate-inequality-1"}
        \begin{split}
        2|E(G)| &= \sum_{f\in{F(G)}}\ell(f) \\
        &= \sum_{f \in X}\ell(f) + \sum_{f \in Y}\ell(f) + \sum_{f \in F(G)\setminus{X\cup Y}}\ell(f) \\
        &\geq 3|X| + 4|Y| + 5(|F(G)| - |X| - |Y|) \\
        &= 5|F(G)| - 2|X| - |Y|.
        \end{split}
    \end{equation}
    
     We claim that 
     \begin{equation}\label{"lower-bound-edges-1"}
         \frac{2}{3}|E(G)| \geq 2|X| + |Y|.
     \end{equation}

    Observe that no edge on the boundary of a face of length three is also in a face of length four, or else a triangle and 4-cycle are adjacent. 
    Because $G$ is not a triangle and no two triangles are adjacent, no two length-3 faces share an edge. Therefore, we conclude that at least $3|X|$ edges of $G$ are on the boundary of a face in $X$. Moreover, every edge on the boundary of a length-4 face is on the boundary of at most two length-4 faces, so at least $2|Y|$ edges of $G$ are on the boundary of a face in $Y$. Thus, we have $|E(G)| \geq 3|X| + 2|Y|$, which implies \eqref{"lower-bound-edges-1"}. 
    
    Combining this with \eqref{"intermediate-inequality-1"} we get that $|F(G)| \leq 8|E(G)|/15$. Therefore, by Euler's Formula,
    \begin{equation*}
        |E(G)| = |V(G)| + |F(G)| - 2 \leq |V(G)| + \frac{8}{15}|E(G)| - 2.
    \end{equation*}
    After rearranging terms, we conclude that Lemma \ref{edge-bound} holds when $G$ is connected. 

    Suppose $G$ is not connected. Let $J$ be the set of components of $G$ with less than four vertices, and let $K$ be the set of components of $G$ with at least four vertices. Note that for all $j \in J$, $|E(j)| \le |V(j)|$. Therefore, if $|K| \ge 1$, Lemma \ref{edge-bound} holds by induction on $|J|$. Otherwise, $|K| = 0$. Note that  $n \le 15(n - 2)/7$ if $n \ge 4$, and in this case $|E(G)| \le |V(G)|$. Therefore, if the total number of vertices in the components in $J$ is at least four, Lemma \ref{edge-bound} holds. Thus, Lemma \ref{edge-bound} also holds when $G$ is not connected.
\end{proof}

Note that this bound matches the bound proved by Dowden \cite{dowden4cycleEdgeBound} for planar graphs without 4-cycles. Dowden also showed for infinitely many $n$ that this bound is tight even under the relaxed assumption that $G$ contains no 4-cycles.

\begin{proof}[Proof of Theorem \ref{triangle-bordering-4-cycle-IR-thm}]
   Let $a = 19/34$, and let $b = 3/34$. Note that $a$ and $b$ satisfy \eqref{deg-1-equation}--\eqref{3-reg-equation}. Let $G$ be a graph that is planar, has no $4$-cycle adjacent to a triangle, and satisfies $\lambda(G) = 0$. We can assume $|V(G)| \geq 4$, or else $G$ has an independent set on at least a third of its vertices. Theorem \ref{technical-thm} and Lemma \ref{edge-bound} imply 
   \begin{equation*}
   \alpha(G)\geq|V(G)|-(a|V(G)|+b|E(G)|) \geq \left(1 - a - \frac{15}{7}b\right)|V(G)| = \frac{1}{4-\frac{1}{30}}|V(G)|.
    \end{equation*}
    Therefore, $G$ has independence ratio at most $4 - 1/30$, as desired. Since every difficult component has an independent set on at least a third of its vertices, Theorem \ref{triangle-bordering-4-cycle-IR-thm} follows. 
    % The second inequality holds from Lemma \ref{edge-bound} and the last inequality holds by choice of $a$ and $b$. Note that if Theorem \ref{triangle-bordering-4-cycle-IR-thm} holds for pertinent graphs with $k$ difficult components, then Theorem \ref{triangle-bordering-4-cycle-IR-thm} holds for pertinent graphs with $k+1$ difficult components (since difficult components have independent sets on at least a third of their vertices). Therefore Theorem \ref{triangle-bordering-4-cycle-IR-thm} holds for all planar graphs with no triangles adjacent to 4-cycles by induction.
\end{proof}

\section{Proof of Theorem \ref{Novosibirsk-IR-thm}}\label{density-novosibirsk-section}
In order to prove Theorem \ref{Novosibirsk-IR-thm}, we first need the following lemma.

\begin{lemma}\label{edge-bound-2}
    If $G$ is an $n$-vertex planar graph with no triangle adjacent to a cycle of length three or five and $n \geq 4$, then $|E(G)| \leq 2(n - 2)$.
\end{lemma}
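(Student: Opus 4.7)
The plan is to adapt the classical proof that every $n$-vertex triangle-free planar graph has at most $2(n-2)$ edges, now carefully accounting for triangular faces. Following the template of Lemma \ref{edge-bound}, I would first reduce to the case where $G$ is connected by handling small components directly, and then to the $2$-connected case via the block decomposition: each block inherits the hypothesis, and combining the $2$-connected bound on blocks of order at least four with the trivial bound on smaller blocks, together with the identity $\sum_i |V(B_i)| = n + k - 1$ (where $k$ is the number of blocks of $G$), delivers the stated bound for $G$.

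Fix a planar embedding of a $2$-connected $G$, so that every face boundary is a simple cycle, and let $F(G)$ denote the set of faces with $\ell(f)$ the length of face $f$. The key structural observation is that every face adjacent to a $3$-face has length at least six. To see this, suppose the $3$-face $T = uvw$ shares edge $uv$ with a distinct face $F$. If $\ell(F) = 3$, then $F$ is a triangle sharing an edge with $T$. If $\ell(F) = 5$, then $F$ itself is a $5$-cycle sharing an edge with $T$. If $\ell(F) = 4$, writing the boundary of $F$ as $u$-$v$-$x$-$y$-$u$, either $w \notin \{x,y\}$, in which case the cycle $u$-$w$-$v$-$x$-$y$-$u$ is a $5$-cycle sharing edges $uw$ and $wv$ with $T$, or $w \in \{x,y\}$, which produces a second triangle sharing an edge with $T$. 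Each of these contradicts the hypothesis.

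With the observation in hand, a short discharging argument finishes the proof. Assign each face $f$ the initial charge $\mu(f) = \ell(f) - 4$, so that $\sum_f \mu(f) = 2|E(G)| - 4|F(G)|$. Redistribute by the rule that every $3$-face receives $1/3$ from each of its three incident faces, all of which have length at least six by the observation. Then each $3$-face ends with charge $-1 + 3\cdot\frac{1}{3} = 0$; each $4$-face keeps charge $0$; each $5$-face keeps charge $+1$ (it has no $3$-face neighbor); and each face of length $\ell \geq 6$ ends with charge at least $\ell - 4 - \ell/3 = \frac{2\ell}{3} - 4 \ge 0$. Hence $2|E(G)| - 4|F(G)| \ge 0$, so $|F(G)| \le |E(G)|/2$, and Euler's formula $|E(G)| = n + |F(G)| - 2$ rearranges to $|E(G)| \le 2(n - 2)$.

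The main obstacle is the structural observation, particularly the $\ell(F) = 4$ case, which requires a careful check of the position of the third triangle vertex $w$ relative to the boundary of $F$. After that is in place, the discharging step and the block-decomposition reduction to the $2$-connected case are routine.
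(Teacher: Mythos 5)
Your proof is correct, and at its core it establishes the same two facts as the paper's proof: that a triangular face cannot share an edge with a face of length $3$, $4$, or a $5$-cycle face (your case analysis for $\ell(F)=4$, producing either a second triangle or a $5$-cycle through $w$, is exactly the fact the paper relies on), and that consequently $|F(G)| \le |E(G)|/2$, after which Euler's formula finishes. The technical execution differs in two ways. First, where you reduce to the $2$-connected case via the block decomposition so that every face boundary is a simple cycle, the paper works with an arbitrary connected embedding and instead absorbs the degenerate length-$5$ faces not bounded by $5$-cycles into the class $X$ of ``triangle-like'' faces; your route costs you the (routine but nontrivial) block arithmetic $\sum_i |V(B_i)| = n + k - 1$, while the paper's route costs a slightly more delicate definition of the face classes. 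Second, you phrase the counting as a discharging argument with initial charge $\ell(f)-4$ and a $1/3$ transfer across each edge between a big face and a $3$-face, whereas the paper double-counts edges directly via $|E(G)| \ge 3|X| + 2|Y| + |Z|$ combined with $2|E(G)| \ge 6|F(G)| - 3|X| - 2|Y| - |Z|$; these are equivalent bookkeeping devices. One small wording caution: a face of length $\ell \ge 6$ can be incident to a $3$-face along two of the triangle's edges, so the transfer rule should be stated per shared edge rather than per incident face --- your charge computation $\ell - 4 - \ell/3$ already implicitly does this, so nothing breaks. Your stronger local observation that every face adjacent to a $3$-face has length at least six also explains cleanly why $4$-faces and $5$-faces retain nonnegative charge, which the paper achieves by the disjointness of the edge sets of $X$ and $Y \cup Z$.
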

\begin{proof}
    Suppose $G$ is connected. Take an arbitrary planar embedding of $G$. Let $F(G)$ be the set of faces in the embedding. Let $\ell(f)$ be the length of each $f \in F(G)$. Let $X$ be the set of faces that have length three or have length five but are not bounded by a 5-cycle. Let $Y$ be the set of faces that have length four, and let $Z$ be the set of faces bounded by a 5-cycle. Note that $X$, $Y$, $Z$, and $F(G)\setminus(X \cup Y \cup Z)$ partition $F(G)$, and all faces of length less than six are in $X \cup Y \cup Z$ since $n \geq 4$. Thus, 
    \begin{equation}\label{"intermediate inequality"}
        \begin{split}
        2|E(G)| &= \sum_{f\in{F(G)}}\ell(f) \\
        &= \sum_{f \in X}\ell(f) + \sum_{f \in Y}\ell(f) + \sum_{f \in Z}\ell(f) + \sum_{f \in F(G)\setminus{X \cup Y \cup Z}}\ell(f) \\
        &\geq 3|X| + 4|Y| + 5|Z| + 6(|F(G)| - |X| - |Y| - |Z|) \\
        &= 6|F(G)| - 3|X| - 2|Y| - |Z|.
        \end{split}
    \end{equation}
    
     We claim that 
     \begin{equation}\label{"lower bound on edges"}
         |E(G)| \geq 3|X| + 2|Y| + |Z|.
     \end{equation}

    Observe that no edge on the boundary of a face in $X$ is also on the boundary of a face in $Y \cup Z$, or else $G$ either contains a triangle adjacent to another triangle or a triangle adjacent to a 5-cycle. Therefore, it suffices to show that there are at least $3|X|$ edges on the boundary of a face in $X$ and at least $2|Y| + |Z|$ edges on the boundary of a face in $Y \cup Z$. Because $G$ is not a triangle and no two triangles are adjacent, no two faces in $X$ share an edge.  Thus, at least $3|X|$ edges of $G$ are on the boundary of a face in $X$. Moreover, every edge is on the boundary of at most two faces, so at least $2|Y| + |Z|$ edges of $G$ are on the boundary of a face in $Y \cup Z$. Thus, \eqref{"lower bound on edges"} holds.

    Combining \eqref{"lower bound on edges"} with \eqref{"intermediate inequality"} we have $|F(G)| \leq \frac{1}{2}|E(G)|$.
    Therefore, by Euler's Formula,
    \begin{equation*}
        |E(G)| = |V(G)| + |F(G)| - 2 \leq |V(G)| + \frac{1}{2}|E(G)| - 2.
    \end{equation*}
    After rearranging terms, we conclude that Lemma \ref{edge-bound-2} holds when $G$ is connected. As in the proof of Lemma \ref{edge-bound}, we can deduce that Lemma \ref{edge-bound-2} also holds when $G$ is not connected.
\end{proof}

The inequality stated by Lemma \ref{edge-bound-2} is tight for cylindrical grids with faces of length four.

\begin{proof}[Proof of Theorem \ref{Novosibirsk-IR-thm}]
    Let $a = 19/34$, and let $b = 3/34$. Note that $a$ and $b$ satisfy \eqref{deg-1-equation}--\eqref{3-reg-equation}. Let $G$ be a graph that is planar, has no triangle adjacent to a triangle or a 5-cycle, and satisfies $\lambda(G) = 0$. Clearly we can assume $|V(G)| \geq 4$, or else $G$ has an independent set on at least third of its vertices. Theorem \ref{technical-thm} and Lemma \ref{edge-bound-2} implies 
    \begin{equation*}
            \alpha(G) \geq |V(G)|-(a|V(G)|+b|E(G)|) \geq \bigl(1 - a - 2b\bigr)|V(G)| = \frac{1}{4-\frac{2}{9}}|V(G)|.
    \end{equation*} 
    Therefore, $G$ has an independence ratio of at most $4 - 2/9$, as desired. Since every difficult component has an independent set on at least a third of its vertices, Theorem \ref{Novosibirsk-IR-thm} follows. 
\end{proof}

\section{Concluding Remarks}\label{conclusion-section}
% Figure \ref{fig:inequalitiesGraphed} shows that inequalities \eqref{deg-1-equation}, \eqref{3-chain-equation}, \eqref{deg-5-equation} are equivalent to inequalities \eqref{deg-1-equation}-\eqref{3-reg-equation} under the assumption that $a,b \geq 0$. However, we considered it appropriate to include inequalities \eqref{deg-2-equation}, \eqref{deg-4-deg-3-nbr-equation}, \eqref{deg-4-deg-3-nbr-difficult-equation}, \eqref{deg-4-deg-3-nbr-2-difficult-equation}, \eqref{deg-4-deg-4-nbr-equation}, \eqref{3-reg-equation}, to improve the readability of the proof of Theorem \ref{technical-thm}. Omitting these inequalities would create the need for casework to show exactly which of inequalities \eqref{deg-1-equation}, \eqref{3-chain-equation}, \eqref{deg-5-equation} is contradicted in Claims \ref{min-degree-1}, \dots, \ref{max-deg-3}. 
As illustrated by Figure \ref{fig:inequalitiesGraphed}, every $a,b \geq 0$ satisfying inequalities \eqref{deg-1-equation}, \eqref{3-chain-equation}, and \eqref{deg-5-equation} also satisfies inequalities \eqref{deg-1-equation}--\eqref{3-reg-equation}. Therefore, as mentioned in Section \ref{introduction-section}, Theorem \ref{technical-thm} actually holds with inequalities  \eqref{deg-2-equation} and \eqref{deg-4-deg-3-nbr-equation}--\eqref{3-reg-equation} omitted. However, we chose to include inequalities \eqref{deg-2-equation} and \eqref{deg-4-deg-3-nbr-equation}--\eqref{3-reg-equation} to improve the readability of the proof of Theorem \ref{technical-thm}.

% The independence ratio we obtain from a particular choice of $(a, b)$ is the reciprocal of a linear function of $(a, b)$. So, in applying Theorem \ref{technical-thm} to prove Theorem \ref{Novosibirsk-IR-thm} and Theorem \ref{triangle-bordering-4-cycle-IR-thm}, the two points of interest are $(a, b) = (19/34, 3/34), (5/13, 3/13)$. However, the value of $1 - a - \frac{15}{7}b$ and $1 - a - 2b$ is greater for $(a, b) = (19/34, 3/34)$. It is therefore this choice of $(a, b)$ that achieves the strongest bound on the independence ratio for graphs with no triangle adjacent to a 4-cycle and graphs with no triangle adjacent to a 5-cycle or 3-cycle. 
Figure 4 also explains the choice of $(a, b) = (19/34, 3/34)$ in the proofs of Theorems \ref{Novosibirsk-IR-thm} and \ref{triangle-bordering-4-cycle-IR-thm}. To obtain the best possible bound on the independence ratio in Theorems \ref{Novosibirsk-IR-thm} and \ref{triangle-bordering-4-cycle-IR-thm} using Theorem \ref{technical-thm}, we choose $a,b \geq0$ satisfying  \eqref{deg-1-equation}--\eqref{3-reg-equation} to minimize $a + 2b$ and $a +15b/7$, respectively. Since  \eqref{deg-1-equation}--\eqref{3-reg-equation} are all linear inequalities, by the Fundamental Theorem of Linear Programming, the minimizers will occur at the corners of the polygonal region bounded by these constraints. The corners are (0, 1), (5/13, 3/13), (19/34, 3/34), and (1, 0), and (19/34, 3/34) is the minimizer in both cases.
\begin{figure}[H]
    \centering
    \includegraphics[]{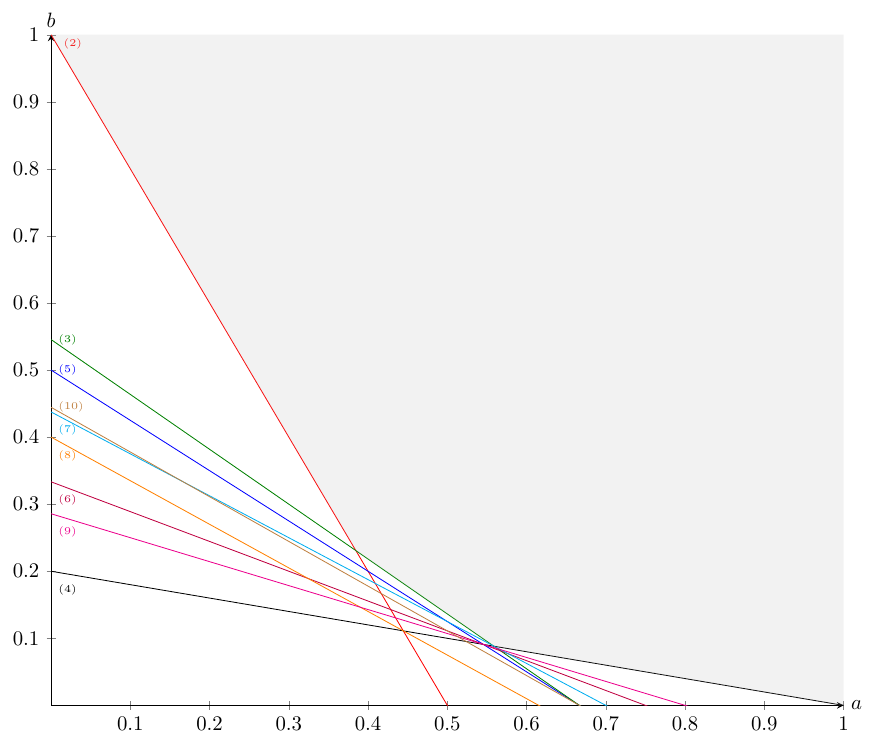}
    \caption{Plot of equations \eqref{deg-1-equation}--\eqref{3-reg-equation}}
    \label{fig:inequalitiesGraphed}
\end{figure}

\bibliographystyle{amsabbrv}
\bibliography{refs}

@preamble{"\providecommand{\MR}[1]{}"}

@article {cohen2017steinberg,
    AUTHOR = {Cohen-Addad, Vincent and Hebdige, Michael and Kr\'{a}l', Daniel
              and Li, Zhentao and Salgado, Esteban},
     TITLE = {Steinberg's conjecture is false},
   JOURNAL = {J. Combin. Theory Ser. B},
  FJOURNAL = {Journal of Combinatorial Theory. Series B},
    VOLUME = {122},
      YEAR = {2017},
     PAGES = {452--456},
      ISSN = {0095-8956},
   MRCLASS = {05C15 (05C10)},
  MRNUMBER = {3575214},
       DOI = {10.1016/j.jctb.2016.07.006},
       URL = {https://doi.org/10.1016/j.jctb.2016.07.006},
}

@article {borodin2005planar,
    AUTHOR = {Borodin, O. V. and Glebov, A. N. and Raspaud, A. and
              Salavatipour, M. R.},
     TITLE = {Planar graphs without cycles of length from 4 to 7 are
              3-colorable},
   JOURNAL = {J. Combin. Theory Ser. B},
  FJOURNAL = {Journal of Combinatorial Theory. Series B},
    VOLUME = {93},
      YEAR = {2005},
    NUMBER = {2},
     PAGES = {303--311},
      ISSN = {0095-8956},
   MRCLASS = {05C15},
  MRNUMBER = {2117940},
MRREVIEWER = {H. L. Abbott},
       DOI = {10.1016/j.jctb.2004.11.001},
       URL = {https://doi.org/10.1016/j.jctb.2004.11.001},
}

@article {xu20063,
    AUTHOR = {Xu, Baogang},
     TITLE = {On 3-colorable plane graphs without 5- and 7-cycles},
   JOURNAL = {J. Combin. Theory Ser. B},
  FJOURNAL = {Journal of Combinatorial Theory. Series B},
    VOLUME = {96},
      YEAR = {2006},
    NUMBER = {6},
     PAGES = {958--963},
      ISSN = {0095-8956},
   MRCLASS = {05C15 (05C10)},
  MRNUMBER = {2274086},
MRREVIEWER = {Chris Rodger},
       DOI = {10.1016/j.jctb.2006.02.005},
       URL = {https://doi.org/10.1016/j.jctb.2006.02.005},
}

@article {borodin2011planar,
    AUTHOR = {Borodin, Oleg V. and Glebov, Aleksei N.},
     TITLE = {Planar graphs with neither 5-cycles nor close 3-cycles are
              3-colorable},
   JOURNAL = {J. Graph Theory},
  FJOURNAL = {Journal of Graph Theory},
    VOLUME = {66},
      YEAR = {2011},
    NUMBER = {1},
     PAGES = {1--31},
      ISSN = {0364-9024},
   MRCLASS = {05C15 (05C10)},
  MRNUMBER = {2742185},
MRREVIEWER = {Wei Dong},
       DOI = {10.1002/jgt.20486},
       URL = {https://doi.org/10.1002/jgt.20486},
}

@article {dvovrak2019planar,
    AUTHOR = {Dvo\v{r}\'{a}k, Zden\v{e}k and Hu, Xiaolan},
     TITLE = {Planar graphs without cycles of length 4 or 5 are
              {$(11:3)$}-colorable},
   JOURNAL = {European J. Combin.},
  FJOURNAL = {European Journal of Combinatorics},
    VOLUME = {82},
      YEAR = {2019},
     PAGES = {102996, 18},
      ISSN = {0195-6698},
   MRCLASS = {05C15 (05C10)},
  MRNUMBER = {3983124},
MRREVIEWER = {Deming Li},
       DOI = {10.1016/j.ejc.2019.07.007},
       URL = {https://doi.org/10.1016/j.ejc.2019.07.007},
}

@article {wu2020planar,
    AUTHOR = {Wu, Haitao and Chen, Yaojun and Hu, Xiaolan},
     TITLE = {Planar graphs without 4- and 6-cycles are {$(7:2)$}-colorable},
   JOURNAL = {J. Comb. Optim.},
  FJOURNAL = {Journal of Combinatorial Optimization},
    VOLUME = {40},
      YEAR = {2020},
    NUMBER = {1},
     PAGES = {45--58},
      ISSN = {1382-6905},
   MRCLASS = {05C15 (05C10)},
  MRNUMBER = {4107163},
MRREVIEWER = {Deming Li},
       DOI = {10.1007/s10878-020-00571-7},
       URL = {https://doi.org/10.1007/s10878-020-00571-7},
}

@article {staton1979some,
    AUTHOR = {Staton, William},
     TITLE = {Some {R}amsey-type numbers and the independence ratio},
   JOURNAL = {Trans. Amer. Math. Soc.},
  FJOURNAL = {Transactions of the American Mathematical Society},
    VOLUME = {256},
      YEAR = {1979},
     PAGES = {353--370},
      ISSN = {0002-9947},
   MRCLASS = {05C55},
  MRNUMBER = {546922},
MRREVIEWER = {Michael O. Albertson},
       DOI = {10.2307/1998115},
       URL = {https://doi.org/10.2307/1998115},
}

@article {brooks1941colouring,
    AUTHOR = {Brooks, R. L.},
     TITLE = {On colouring the nodes of a network},
   JOURNAL = {Proc. Cambridge Philos. Soc.},
  FJOURNAL = {Proceedings of the Cambridge Philosophical Society},
    VOLUME = {37},
      YEAR = {1941},
     PAGES = {194--197},
      ISSN = {0008-1981},
   MRCLASS = {56.0X},
  MRNUMBER = {12236},
MRREVIEWER = {P. Franklin},
}

@article {liang2023note,
    AUTHOR = {Liang, Zuosong and Xu, Guangjun and Bai, Chunsong},
     TITLE = {A note on the three color problem on planar graphs without 4-
              and 5-cycles and without ext-triangular 7-cycles},
   JOURNAL = {Discrete Math.},
  FJOURNAL = {Discrete Mathematics},
    VOLUME = {346},
      YEAR = {2023},
    NUMBER = {1},
     PAGES = {Paper No. 113192, 3},
      ISSN = {0012-365X},
   MRCLASS = {05C15 (05C10)},
  MRNUMBER = {4491315},
       DOI = {10.1016/j.disc.2022.113192},
       URL = {https://doi.org/10.1016/j.disc.2022.113192},
}

@article {lu2009without479cycles,
    AUTHOR = {Lu, Huajing and Wang, Yingqian and Wang, Weifan and Bu, Yuehua
              and Montassier, Micka\"{e}l and Raspaud, Andr\'{e}},
     TITLE = {On the 3-colorability of planar graphs without 4-, 7- and
              9-cycles},
   JOURNAL = {Discrete Math.},
  FJOURNAL = {Discrete Mathematics},
    VOLUME = {309},
      YEAR = {2009},
    NUMBER = {13},
     PAGES = {4596--4607},
      ISSN = {0012-365X},
   MRCLASS = {05C15},
  MRNUMBER = {2519199},
MRREVIEWER = {Jianfeng Hou},
       DOI = {10.1016/j.disc.2009.02.030},
       URL = {https://doi.org/10.1016/j.disc.2009.02.030},
}

@article {cranston2018planar,
    AUTHOR = {Cranston, Daniel W. and Rabern, Landon},
     TITLE = {Planar graphs are 9/2-colorable},
   JOURNAL = {J. Combin. Theory Ser. B},
  FJOURNAL = {Journal of Combinatorial Theory. Series B},
    VOLUME = {133},
      YEAR = {2018},
     PAGES = {32--45},
      ISSN = {0095-8956},
   MRCLASS = {05C15 (05C10)},
  MRNUMBER = {3856703},
MRREVIEWER = {Haihui Zhang},
       DOI = {10.1016/j.jctb.2018.04.002},
       URL = {https://doi.org/10.1016/j.jctb.2018.04.002},
}

@article {dvorak2020fractional,
    AUTHOR = {Dvo\v{r}\'{a}k, Zden\v{e}k and Hu, Xiaolan},
     TITLE = {Fractional coloring of planar graphs of girth five},
   JOURNAL = {SIAM J. Discrete Math.},
  FJOURNAL = {SIAM Journal on Discrete Mathematics},
    VOLUME = {34},
      YEAR = {2020},
    NUMBER = {1},
     PAGES = {538--555},
      ISSN = {0895-4801},
   MRCLASS = {05C15 (05C10 05C72)},
  MRNUMBER = {4068322},
MRREVIEWER = {Jonathan A. Noel},
       DOI = {10.1137/18M1214068},
       URL = {https://doi.org/10.1137/18M1214068},
}

@article {miao2018planar,
    AUTHOR = {Miao, Zhengke and Wang, Yingqian and Zhang, Chuanni and Zhang,
              Huajun},
     TITLE = {Planar graphs without 3-cycles adjacent to cycles of length 3
              or 5 are {$(3,1)$}-colorable},
   JOURNAL = {Discrete Math.},
  FJOURNAL = {Discrete Mathematics},
    VOLUME = {341},
      YEAR = {2018},
    NUMBER = {3},
     PAGES = {588--599},
      ISSN = {0012-365X},
   MRCLASS = {05C15 (05C38)},
  MRNUMBER = {3754372},
MRREVIEWER = {Theodore C. Enns},
       DOI = {10.1016/j.disc.2017.11.002},
       URL = {https://doi.org/10.1016/j.disc.2017.11.002},
}

@article {liu2016planar,
    AUTHOR = {Liu, Runrun and Li, Xiangwen and Yu, Gexin},
     TITLE = {Planar graphs without 5-cycles and intersecting triangles are
              {$(1,1,0)$}-colorable},
   JOURNAL = {Discrete Math.},
  FJOURNAL = {Discrete Mathematics},
    VOLUME = {339},
      YEAR = {2016},
    NUMBER = {2},
     PAGES = {992--1003},
      ISSN = {0012-365X},
   MRCLASS = {05C10 (05C15)},
  MRNUMBER = {3431410},
MRREVIEWER = {Matthew Walsh},
       DOI = {10.1016/j.disc.2015.10.037},
       URL = {https://doi.org/10.1016/j.disc.2015.10.037},
}

@article {borodin2010planar,
    AUTHOR = {Borodin, Oleg V. and Montassier, Mickael and Raspaud, Andr\'{e}},
     TITLE = {Planar graphs without adjacent cycles of length at most seven
              are 3-colorable},
   JOURNAL = {Discrete Math.},
  FJOURNAL = {Discrete Mathematics},
    VOLUME = {310},
      YEAR = {2010},
    NUMBER = {1},
     PAGES = {167--173},
      ISSN = {0012-365X},
   MRCLASS = {05C15},
  MRNUMBER = {2558979},
       DOI = {10.1016/j.disc.2009.08.010},
       URL = {https://doi.org/10.1016/j.disc.2009.08.010},
}

@article {huang2020every,
    AUTHOR = {Huang, Ziwen},
     TITLE = {Every planar graph without triangles adjacent to cycles of
              length 3 or 6 is {$(1,1,1)$}-colorable},
   JOURNAL = {Discrete Math.},
  FJOURNAL = {Discrete Mathematics},
    VOLUME = {343},
      YEAR = {2020},
    NUMBER = {6},
     PAGES = {111846, 12},
      ISSN = {0012-365X},
   MRCLASS = {05C15 (05C10)},
  MRNUMBER = {4062533},
MRREVIEWER = {Shriram K. Nimbhorkar},
       DOI = {10.1016/j.disc.2020.111846},
       URL = {https://doi.org/10.1016/j.disc.2020.111846},
}

@article {zhang2016planar,
    AUTHOR = {Zhang, Chuanni and Wang, Yingqian and Chen, Min},
     TITLE = {Planar graphs without adjacent cycles of length at most five
              are (1,1,0)-colorable},
   JOURNAL = {Discrete Math.},
  FJOURNAL = {Discrete Mathematics},
    VOLUME = {339},
      YEAR = {2016},
    NUMBER = {12},
     PAGES = {3032--3042},
      ISSN = {0012-365X},
   MRCLASS = {05C15 (05C10)},
  MRNUMBER = {3533351},
MRREVIEWER = {Jiangxu Kong},
       DOI = {10.1016/j.disc.2016.06.011},
       URL = {https://doi.org/10.1016/j.disc.2016.06.011},
}

@incollection {steinberg1993stateOf3CP,
    AUTHOR = {Steinberg, Richard},
     TITLE = {The state of the three color problem},
 BOOKTITLE = {Quo vadis, graph theory?},
    SERIES = {Ann. Discrete Math.},
    VOLUME = {55},
     PAGES = {211--248},
 PUBLISHER = {North-Holland, Amsterdam},
      YEAR = {1993},
   MRCLASS = {05C15},
  MRNUMBER = {1217995},
MRREVIEWER = {Steve Fisk},
       DOI = {10.1016/S0167-5060(08)70391-1},
       URL = {https://doi.org/10.1016/S0167-5060(08)70391-1},
}

@article {borodin2003SufficientCodition,
    AUTHOR = {Borodin, O. V. and Raspaud, A.},
     TITLE = {A sufficient condition for planar graphs to be 3-colorable},
   JOURNAL = {J. Combin. Theory Ser. B},
  FJOURNAL = {Journal of Combinatorial Theory. Series B},
    VOLUME = {88},
      YEAR = {2003},
    NUMBER = {1},
     PAGES = {17--27},
      ISSN = {0095-8956},
   MRCLASS = {05C15},
  MRNUMBER = {1973256},
MRREVIEWER = {K. S. Sarkaria},
       DOI = {10.1016/S0095-8956(03)00025-X},
       URL = {https://doi.org/10.1016/S0095-8956(03)00025-X},
}

@article {Garey1976ThreeColNP,
    AUTHOR = {Garey, M. R. and Johnson, D. S. and Stockmeyer, L.},
     TITLE = {Some simplified {NP}-complete graph problems},
   JOURNAL = {Theoretical Computer Science},
  FJOURNAL = {Theoretical Computer Science},
    VOLUME = {1},
      YEAR = {1976},
    NUMBER = {3},
     PAGES = {237--267},
      ISSN = {0304-3975},
   MRCLASS = {68A20},
  MRNUMBER = {411240},
MRREVIEWER = {Shimon Even},
       DOI = {https://doi.org/10.1016/0304-3975(76)90059-1},
       URL = {https://www.sciencedirect.com/science/article/pii/0304397576900591},
}

@article {grotzsch1959zur,
    AUTHOR = {Gr\"{o}tzsch, Herbert},
     TITLE = {Zur {T}heorie der diskreten {G}ebilde. {VII}. {E}in
              {D}reifarbensatz f\"{u}r dreikreisfreie {N}etze auf der {K}ugel},
   JOURNAL = {Wiss. Z. Martin-Luther-Univ. Halle-Wittenberg Math.-Natur.
              Reihe},
  FJOURNAL = {Wissenschaftliche Zeitschrift der Martin-Luther-Universit\"{a}t
              Halle-Wittenberg. Mathematisch-Naturwissenschaftliche Reihe},
    VOLUME = {8},
      YEAR = {1958/59},
     PAGES = {109--120},
      ISSN = {0138-1504},
   MRCLASS = {55.00},
  MRNUMBER = {116320},
}

@article {La2022extensions,
    AUTHOR = {La, Hoang and Lu\v{z}ar, Borut and \v{S}torgel, Kenny},
     TITLE = {Further extensions of the {G}r\"{o}tzsch theorem},
   JOURNAL = {Discrete Math.},
  FJOURNAL = {Discrete Mathematics},
    VOLUME = {345},
      YEAR = {2022},
    NUMBER = {6},
     PAGES = {Paper No. 112849, 12},
      ISSN = {0012-365X},
   MRCLASS = {05C15 (05C10)},
  MRNUMBER = {4383094},
MRREVIEWER = {Meenakshi P. Wasadikar},
       DOI = {10.1016/j.disc.2022.112849},
       URL = {https://doi.org/10.1016/j.disc.2022.112849},
}

@article {Huang2022relaxNovosibirsk,
    AUTHOR = {Huang, Ziwen},
     TITLE = {A relaxation of {N}ovosibirsk 3-color conjecture},
   JOURNAL = {Discrete Math.},
  FJOURNAL = {Discrete Mathematics},
    VOLUME = {345},
      YEAR = {2022},
    NUMBER = {4},
     PAGES = {Paper No. 112762, 19},
      ISSN = {0012-365X},
   MRCLASS = {05C15 (05C10)},
  MRNUMBER = {4357323},
MRREVIEWER = {Hung-Lin Fu},
       DOI = {10.1016/j.disc.2021.112762},
       URL = {https://doi.org/10.1016/j.disc.2021.112762},
}

@article {Dvorak2015precolored8cycle,
    AUTHOR = {Dvo\v{r}\'{a}k, Zden\v{e}k and Lidick\'{y}, Bernard},
     TITLE = {3-coloring triangle-free planar graphs with a precolored
              8-cycle},
   JOURNAL = {J. Graph Theory},
  FJOURNAL = {Journal of Graph Theory},
    VOLUME = {80},
      YEAR = {2015},
    NUMBER = {2},
     PAGES = {98--111},
      ISSN = {0364-9024},
   MRCLASS = {05C15 (05C10)},
  MRNUMBER = {3385725},
MRREVIEWER = {Hung-Lin Fu},
       DOI = {10.1002/jgt.21842},
       URL = {https://doi.org/10.1002/jgt.21842},
}

@article {Choi2018precolored9cycle,
    AUTHOR = {Choi, Ilkyoo and Ekstein, Jan and Holub, P\v{r}emysl and Lidick\'{y},
              Bernard},
     TITLE = {3-coloring triangle-free planar graphs with a precolored
              9-cycle},
   JOURNAL = {European J. Combin.},
  FJOURNAL = {European Journal of Combinatorics},
    VOLUME = {68},
      YEAR = {2018},
     PAGES = {38--65},
      ISSN = {0195-6698},
   MRCLASS = {05C15 (05C10)},
  MRNUMBER = {3720243},
MRREVIEWER = {Zepeng Li},
       DOI = {10.1016/j.ejc.2017.07.010},
       URL = {https://doi.org/10.1016/j.ejc.2017.07.010},
}

@article {Hill2013relaxSteinberg,
    AUTHOR = {Hill, Owen and Yu, Gexin},
     TITLE = {A relaxation of {S}teinberg's conjecture},
   JOURNAL = {SIAM J. Discrete Math.},
  FJOURNAL = {SIAM Journal on Discrete Mathematics},
    VOLUME = {27},
      YEAR = {2013},
    NUMBER = {1},
     PAGES = {584--596},
      ISSN = {0895-4801},
   MRCLASS = {05C15 (05C10)},
  MRNUMBER = {3037816},
MRREVIEWER = {Roger K. Yeh},
       DOI = {10.1137/120888752},
       URL = {https://doi.org/10.1137/120888752},
}

@article {dvorak2015fractional,
    AUTHOR = {Dvo\v{r}\'{a}k, Zden\v{e}k and Sereni, Jean-S\'{e}bastien and Volec, Jan},
     TITLE = {Fractional coloring of triangle-free planar graphs},
   JOURNAL = {Electron. J. Combin.},
  FJOURNAL = {Electronic Journal of Combinatorics},
    VOLUME = {22},
      YEAR = {2015},
    NUMBER = {4},
     PAGES = {Paper 4.11, 7},
   MRCLASS = {05C72 (05C10 05C15)},
  MRNUMBER = {3414670},
MRREVIEWER = {B. Naimeh Onagh},
       DOI = {10.37236/4135},
       URL = {https://doi.org/10.37236/4135},
}

@article {dvorak2008planar,
    AUTHOR = {Dvo\v{r}\'{a}k, Z. and \v{S}krekovski, R. and Valla, T.},
     TITLE = {Planar graphs of odd-girth at least 9 are homomorphic to the
              {P}etersen graph},
   JOURNAL = {SIAM J. Discrete Math.},
  FJOURNAL = {SIAM Journal on Discrete Mathematics},
    VOLUME = {22},
      YEAR = {2008},
    NUMBER = {2},
     PAGES = {568--591},
      ISSN = {0895-4801},
   MRCLASS = {05C15 (05C10 05C38)},
  MRNUMBER = {2399366},
MRREVIEWER = {Douglas B. West},
       DOI = {10.1137/060650507},
       URL = {https://doi.org/10.1137/060650507},
}

@article {hilton1973a5colour,
    AUTHOR = {Hilton, A. J. W. and Rado, R. and Scott, S. H.},
     TITLE = {A ({$<5$})-colour theorem for planar graphs},
   JOURNAL = {Bull. London Math. Soc.},
  FJOURNAL = {The Bulletin of the London Mathematical Society},
    VOLUME = {5},
      YEAR = {1973},
     PAGES = {302--306},
      ISSN = {0024-6093},
   MRCLASS = {05C15},
  MRNUMBER = {323604},
MRREVIEWER = {G. Chartrand},
       DOI = {10.1112/blms/5.3.302},
       URL = {https://doi.org/10.1112/blms/5.3.302},
}

@article {pirnazar2002girth,
    AUTHOR = {Pirnazar, Amir and Ullman, Daniel H.},
     TITLE = {Girth and fractional chromatic number of planar graphs},
   JOURNAL = {J. Graph Theory},
  FJOURNAL = {Journal of Graph Theory},
    VOLUME = {39},
      YEAR = {2002},
    NUMBER = {3},
     PAGES = {201--217},
      ISSN = {0364-9024},
   MRCLASS = {05C15},
  MRNUMBER = {1883597},
MRREVIEWER = {Cun-Quan Zhang},
       DOI = {10.1002/jgt.10024.abs},
       URL = {https://doi.org/10.1002/jgt.10024.abs},
}

@article {dvorak2017large,
    AUTHOR = {Dvo\v{r}\'{a}k, Zden\v{e}k and Mnich, Matthias},
     TITLE = {Large independent sets in triangle-free planar graphs},
   JOURNAL = {SIAM J. Discrete Math.},
  FJOURNAL = {SIAM Journal on Discrete Mathematics},
    VOLUME = {31},
      YEAR = {2017},
    NUMBER = {2},
     PAGES = {1355--1373},
      ISSN = {0895-4801},
   MRCLASS = {68Q25 (05C85 68R10 68W05)},
  MRNUMBER = {3665193},
       DOI = {10.1137/16M1061862},
       URL = {https://doi.org/10.1137/16M1061862},
}

@article {jones1984independence,
    AUTHOR = {Jones, Kathryn Fraughnaugh},
     TITLE = {Independence in graphs with maximum degree four},
   JOURNAL = {J. Combin. Theory Ser. B},
  FJOURNAL = {Journal of Combinatorial Theory. Series B},
    VOLUME = {37},
      YEAR = {1984},
    NUMBER = {3},
     PAGES = {254--269},
      ISSN = {0095-8956},
   MRCLASS = {05C35 (05C15)},
  MRNUMBER = {769368},
MRREVIEWER = {Michael O. Albertson},
       DOI = {10.1016/0095-8956(84)90058-3},
       URL = {https://doi.org/10.1016/0095-8956(84)90058-3},
}

@article {steinberg1993ramsey,
    AUTHOR = {Steinberg, Richard and Tovey, Craig A.},
     TITLE = {Planar {R}amsey numbers},
   JOURNAL = {J. Combin. Theory Ser. B},
  FJOURNAL = {Journal of Combinatorial Theory. Series B},
    VOLUME = {59},
      YEAR = {1993},
    NUMBER = {2},
     PAGES = {288--296},
      ISSN = {0095-8956},
   MRCLASS = {05C55},
  MRNUMBER = {1244935},
MRREVIEWER = {J. E. Graver},
       DOI = {10.1006/jctb.1993.1070},
       URL = {https://doi.org/10.1006/jctb.1993.1070},
}

@article {appel1976every,
    AUTHOR = {Appel, K. and Haken, W.},
     TITLE = {Every planar map is four colorable},
   JOURNAL = {Bull. Amer. Math. Soc.},
  FJOURNAL = {Bulletin of the American Mathematical Society},
    VOLUME = {82},
      YEAR = {1976},
    NUMBER = {5},
     PAGES = {711--712},
      ISSN = {0002-9904},
   MRCLASS = {05C15},
  MRNUMBER = {424602},
MRREVIEWER = {F. Bernhart},
       DOI = {10.1090/S0002-9904-1976-14122-5},
       URL = {https://doi.org/10.1090/S0002-9904-1976-14122-5},
}

@article {huang2019decomposing,
    AUTHOR = {Huang, Ziwen and Liu, Runrun and Wang, Gaozhen},
     TITLE = {Decomposing a planar graph without triangular 4-cycles into a
              matching and a 3-colorable graph},
   JOURNAL = {Discrete Appl. Math.},
  FJOURNAL = {Discrete Applied Mathematics. The Journal of Combinatorial
              Algorithms, Informatics and Computational Sciences},
    VOLUME = {268},
      YEAR = {2019},
     PAGES = {112--118},
      ISSN = {0166-218X},
   MRCLASS = {05C70 (05C10 05C15)},
  MRNUMBER = {4007235},
       DOI = {10.1016/j.dam.2019.04.026},
       URL = {https://doi.org/10.1016/j.dam.2019.04.026},
}

@article {dowden4cycleEdgeBound,
    AUTHOR = {Dowden, Chris},
     TITLE = {Extremal {$C_4$}-free/{$C_5$}-free planar graphs},
   JOURNAL = {J. Graph Theory},
  FJOURNAL = {Journal of Graph Theory},
    VOLUME = {83},
      YEAR = {2016},
    NUMBER = {3},
     PAGES = {213--230},
      ISSN = {0364-9024},
   MRCLASS = {05C10 (05C35)},
  MRNUMBER = {3549506},
MRREVIEWER = {Guillermo Pineda-Villavicencio},
       DOI = {10.1002/jgt.21991},
       URL = {https://doi.org/10.1002/jgt.21991},
}

@article{HeckmanThomas01,
title = {A new proof of the independence ratio of triangle-free cubic graphs},
journal = {Discrete Mathematics},
volume = {233},
number = {1},
pages = {233-237},
year = {2001},
note = {Czech and Slovak 2},
issn = {0012-365X},
doi = {https://doi.org/10.1016/S0012-365X(00)00242-9},
url = {https://www.sciencedirect.com/science/article/pii/S0012365X00002429},
author = {Christopher Carl Heckman and Robin Thomas},
abstract = {Staton proved that every triangle-free graph on n vertices with maximum degree 3 has an independent set of size at least 5n/14. A simpler proof was found by Jones. We give a yet simpler proof, and use it to design a linear-time algorithm to find such an independent set.}
}
\end{document}